\numberwithin{equation}{section}
\theoremstyle{plain}
\numberwithin{equation}{section}
\newtheorem{theorem}[equation]{Theorem}
\newtheorem{corollary}[equation]{Corollary}
\newtheorem{proposition}[equation]{Proposition}
\newtheorem{lemma}[equation]{Lemma}
\theoremstyle{definition}
\newtheorem*{definition}{Definition}
\theoremstyle{definition}
\newtheorem{remark}[equation]{Remark}
\newtheorem*{conjecture}{Conjecture}
\newcommand{\A}{\mathscr{A}}
\newcommand{\B}{\mathscr{B}}
\newcommand{\C}{\mathscr{C}}
\newcommand{\T}{\mathscr{T}}
\newcommand{\X}{\mathscr{X}}
\newcommand{\Y}{\mathscr{Y}}
\newcommand\Ext{\operatorname{Ext}}
\renewcommand\mod{\operatorname{mod}}
\newcommand\Homol{\operatorname{H}}
\newcommand\op{\operatorname{op}}
\newcommand\ot{\otimes}
\newcommand\Hom{\operatorname{Hom}}
\newcommand\Hoch{\operatorname{HH}}
\newcommand\unit{\mathbf{1}}
\DeclareMathOperator{\Ker}{Ker}
\newcommand{\DOT}{\setlength{\unitlength}{1pt}\begin{picture}(2.5,2)(1,1)\put(2,3){\circle*{2}}\end{picture}}
\newcommand{\bu}{\DOT}
\newcommand{\Coh}{\operatorname{H}\nolimits}
\newcommand{\Ho}{\operatorname{\Coh^{\bu}}\nolimits}
\newcommand\Kdim{\operatorname{Kdim}}
\newcommand\lresdim{\operatorname{l-resdim}}
\newcommand\rresdim{\operatorname{r-resdim}}
\newcommand\add{\operatorname{add}}
\newcommand\repdim{\operatorname{repdim}}
\newcommand\gldim{\operatorname{gldim}}
\newcommand\perf{\operatorname{perf}}
\newcommand\derived{\operatorname{D^b}}
\newcommand\sing{\operatorname{D_{sg}}}
\newcommand{\stable}{\operatorname{stab}\nolimits}
\newcommand{\thick}{\operatorname{thick}\nolimits}
\newcommand{\az}{\mathfrak{a}}
\newcommand{\diagram}[3]{\matrix (#1) [matrix of math nodes,row
  sep={#2},column sep={#3},text height=1.5ex,text
  depth=0.25ex]}
\def\blx@maxline{77}
\begin{document}

\title[Separable equivalences]{Separable equivalences, finitely generated cohomology and finite tensor categories}

\author{Petter Andreas Bergh}

\address{Petter Andreas Bergh \\ Institutt for matematiske fag \\
NTNU \\ N-7491 Trondheim \\ Norway} \email{petter.bergh@ntnu.no}

\subjclass[2020]{16E40, 16T05, 18M05, 19D23}
\keywords{Separable equivalences, finitely generated cohomology, finite tensor categories}

\begin{abstract}
We show that finitely generated cohomology is invariant under separable equivalences for all algebras. As a result, we obtain a proof of the finite generation of cohomology for finite symmetric tensor categories in characteristic zero, as conjectured by Etingof and Ostrik. Moreover, for such categories we also determine the representation dimension and the Rouquier dimension of the stable category. Finally, we recover a number of results on the cohomology of stably equivalent and singularly equivalent algebras.
\end{abstract}

\maketitle

\section{Introduction}\label{sec:intro}

Given an algebra over a commutative ring, the Hochschild cohomology ring acts on the cohomology of any pair of modules. As a result, one can define support varieties for modules in terms of the spectrum of the cohomology ring, introduced in \cite{SnashallSolberg}. As shown in \cite{EHSST}, when the cohomology ring is Noetherian, and the cohomology of the modules is finitely generated, then these support varieties encode important homological information. In this situation, the theory is very similar to the classical ones for finite groups, cocommutative Hopf algebras and commutative complete intersections. For example, the varieties detect the modules of finite projective dimension; these are precisely the modules with trivial varieties. 

In \cite{Kadison}, the notion of separably equivalent algebras was introduced. Several years later, it was shown in \cite{Linckelmann} that finitely generated cohomology is invariant under separable equivalences, provided the algebras involved are \emph{symmetric}. This was then used to study the cohomology of certain Hecke algebras of classical type.

In this paper, we prove that finitely generated cohomology is invariant under separable equivalences for \emph{all} algebras. We apply this first to skew group algebras, a class of algebras which by Deligne's theorem from \cite{Deligne} is closely linked to finite tensor categories. Namely, over an algebraically closed field of characteristic zero, every symmetric such category is equivalent to the representation category of a certain finite dimensional Hopf algebra which can be realized as the skew group algebra of an exterior algebra. As a result, we obtain a proof of the finite generation of cohomology for symmetric tensor categories in characteristic zero, as conjectured by Etingof and Ostrik in \cite{EtingofOstrik}. For such a category $\C$, we also determine the representation dimension, and the Rouquier dimension of the stable category; both these invariants are linked to the Krull dimension of the category, that is, the Krull dimension of the cohomology ring. Namely, we show that there are equalities
$$\Kdim \C = \dim \stable \C + 1 = \repdim \C -1 < \infty$$
where $\Kdim \C$ denotes the Krull dimension, $\stable \C$ denotes the stable category of $\C$, and $\repdim \C$ denotes the representation dimension. The latter was introduced for module categories by Auslander, and provides a measure of how far the category $\C$ is from having finite representation type. The Rouquier dimension of $\stable \C$ measures how many steps, or cones, one needs in order to generate the triangulated category from a single object.

In the final section, we explore some further applications of the result on finitely generated cohomology and separable equivalences. In particular, we recover a number of results on the cohomology of stably equivalent and singularly equivalent algebras.

\subsection*{Acknowledgments}
I would like to thank Markus Linckelmann, Cris Negron and Sarah Witherspoon for valuable comments and suggestions.

\section{A generalized Eckmann-Shapiro lemma}\label{sec:Eckmann-Shapiro}

The classical Eckmann-Shapiro lemma (cf.\ \cite[Corollary 2.8.4]{Benson}) provides isomorphisms of extension groups over different rings, via restriction and extension of scalars. Explicitly, if $R \to S$ is a homomorphism of rings, with $S$ projective as a right $R$-module, then for every $R$-module $M$, $S$-module $N$, and integer $n \ge 0$, there is an isomorphism
$$\Ext_R^n \left ( M,N \right ) \simeq \Ext_S^n \left ( S \otimes_R M, N \right )$$
of abelian groups. The proof applies the Hom-tensor adjunction, and the fact that the functors $S \otimes_R -$ and $\Hom_S(S,-)$ are exact (note that $\Hom_S(S,N)$ is precisely the restriction of $N$ to $R$).

\sloppy In this section, we record a generalized version of the Eckmann-Shapiro lemma, for abelian categories. The result is basically the first two parts of \cite[Proposition 9.1]{ErdmannSolbergWang}, but we include a proof because of mildly different assumptions and notation. Given an abelian category $\A$, two objects $M,N \in \A$, and an integer $n \ge 0$, we define $\Ext_{\A}^n(M,N)$ as the abelian group of equivalence classes of $n$-fold extensions of $M$ by $N$. When $\A$ has enough projective objects, this can be defined in terms of projective resolutions of $M$, like for modules over a ring. We denote the graded abelian group $\bigoplus_{n=0}^{\infty}\Ext_{\A}^n(M,N)$ by $\Ext_{\A}^*(M,N)$; using Yoneda products (that is, splicing of exact sequences), the graded group $\Ext_{\A}^*(M,M)$ becomes a graded ring, and $\Ext_{\A}^*(M,N)$ becomes a graded $\Ext_{\A}^*(N,N)$-$\Ext_{\A}^*(M,M)$-bimodule. We shall use the symbol $\circ$ to denote both the Yoneda product and composition of morphisms. Finally, an exact functor $F \colon \A \to \B$, from $\A$ to another abelian category $\B$, induces a homomorphism 
\begin{center}
\begin{tikzpicture}
\diagram{d}{3em}{3em}{
\Ext_{\A}^* \left ( M,M \right ) & \Ext_{\B}^* \left ( F(M),F(M) \right ) \\
 };
\path[->, font = \scriptsize, auto]
(d-1-1) edge node{$\varphi_M^F$} (d-1-2);
\end{tikzpicture}
\end{center}
of graded rings, in the obvious way.

\begin{theorem}\label{thm:adjointExt}
\sloppy Suppose that $\A$ and $\B$ are abelian categories with enough projective objects, and that $F \colon \A \to \B$ and $G \colon \B \to \A$ are exact functors forming an adjoint pair $(F,G)$. Then for all objects $X \in \A$ and $Y \in \B$, a choice of adjoint isomorphisms induces an isomorphism
\begin{center}
\begin{tikzpicture}
\diagram{d}{3em}{3em}{
\Ext_{\A}^* \left ( X,G(Y) \right ) & \Ext_{\B}^* \left ( F(X),Y \right ) \\
 };
\path[->, font = \scriptsize, auto]
(d-1-1) edge node{$\tau$} (d-1-2);
\end{tikzpicture}
\end{center}
of graded abelian groups. Furthermore, when we view $\Ext_{\A}^* \left ( X,G(Y) \right )$ and $\Ext_{\B}^* \left ( F(X),Y \right )$ as graded right $\Ext_{\A}^* \left ( X,X \right )$-modules -- the latter via the ring homomorphism 
\begin{center}
\begin{tikzpicture}
\diagram{d}{3em}{3em}{
\Ext_{\A}^* \left ( X,X \right ) & \Ext_{\B}^* \left ( F(X),F(X) \right ) \\
 };
\path[->, font = \scriptsize, auto]
(d-1-1) edge node{$\varphi_X^F$} (d-1-2);
\end{tikzpicture}
\end{center}
induced by $F$ -- then $\tau$ becomes an isomorphism of such. That is, if $\eta \in \Ext_{\A}^* \left ( X,G(Y) \right )$ and $\theta \in \Ext_{\A}^* \left ( X,X \right )$ are homogeneous elements, then $\tau ( \eta \circ \theta ) = \tau ( \eta ) \circ  \varphi_X^F ( \theta )$.
\end{theorem}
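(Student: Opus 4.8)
The plan is to compute both sides from a single projective resolution $P_\bullet \xrightarrow{\epsilon} X$ in $\A$, transporting everything across the adjunction at the cochain level. Two preliminary facts are needed. First, since $F$ is left adjoint to the exact functor $G$, the functor $F$ sends projectives to projectives: for $P$ projective, $\Hom_\B(F(P),-)\cong\Hom_\A(P,G(-))$ is a composite of exact functors. Together with exactness of $F$ this shows that $F(P_\bullet)\xrightarrow{F(\epsilon)}F(X)$ is a projective resolution of $F(X)$ in $\B$, so that $\Ext_\B^*(F(X),Y)=\Homol^*\Hom_\B(F(P_\bullet),Y)$. Second, the adjoint isomorphisms are natural in the first variable, and naturality with respect to the differentials $d_k\colon P_k\to P_{k-1}$ is exactly the statement that they assemble into an isomorphism of cochain complexes $\Hom_\B(F(P_\bullet),Y)\cong\Hom_\A(P_\bullet,G(Y))$. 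Passing to cohomology yields the graded isomorphism $\tau$; independence of the chosen resolution is the usual comparison-theorem argument.

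For the module structure, fix homogeneous $\eta\in\Ext_\A^m(X,G(Y))$ and $\theta\in\Ext_\A^n(X,X)$, represented by cocycles $g\colon P_m\to G(Y)$ and $f\colon P_n\to X$ in the respective Hom-complexes. Choose a lift of $f$ to a chain map of degree $-n$, that is, morphisms $f_k\colon P_{n+k}\to P_k$ with $\epsilon f_0=f$ and $d_kf_k=f_{k-1}d_{n+k}$; then the Yoneda product $\eta\circ\theta$ is represented by $g\circ f_m\colon P_{n+m}\to G(Y)$. Applying the exact functor $F$ to $f_\bullet$ produces a chain map $F(f_\bullet)$ on the resolution $F(P_\bullet)$ covering the cocycle $F(f)\colon F(P_n)\to F(X)$; since $F(P_\bullet)\to F(X)$ is a projective resolution, this cocycle represents $\varphi_X^F(\theta)$ — this is the content of "$\varphi^F$ is induced in the obvious way", unwound at the cochain level by comparing the extension attached to the cocycle $f$ with its image under $F$ and the resolution $F(P_\bullet)$.

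Consequently $\tau(\eta)\circ\varphi_X^F(\theta)$ is represented by $\hat g\circ F(f_m)\colon F(P_{n+m})\to Y$, where $\hat g\colon F(P_m)\to Y$ is the adjunct of $g$ (and is a cocycle for $F(P_\bullet)$, again by naturality). On the other hand $\tau(\eta\circ\theta)$ is represented by the adjunct of $g\circ f_m$. Thus the desired identity $\tau(\eta\circ\theta)=\tau(\eta)\circ\varphi_X^F(\theta)$ is precisely naturality of the adjunction isomorphism $\Hom_\A(-,G(Y))\xrightarrow{\sim}\Hom_\B(F(-),Y)$ evaluated at the morphism $f_m\colon P_{n+m}\to P_m$, namely $\widehat{g\circ f_m}=\hat g\circ F(f_m)$.

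The routine parts are the well-definedness of $\tau$ and the cochain-level formula for Yoneda products. The main obstacle is the bookkeeping in the previous paragraph: identifying $\varphi_X^F(\theta)$ with the class computed from the cocycle $F(f)$ via the resolution $F(P_\bullet)$, i.e. reconciling the Yoneda-extension description of the ring homomorphism $\varphi_X^F$ with its projective-resolution description. Once that is in place, the module compatibility collapses to naturality of the adjunction.
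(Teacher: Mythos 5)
Your proposal is correct and follows essentially the same route as the paper: transport a projective resolution of $X$ through $F$ (using that a left adjoint of an exact functor preserves projectives), obtain $\tau$ from the naturality of the adjunction isomorphisms at the cochain level, and verify the module compatibility by lifting the cocycle representing $\theta$, applying $F$ to the lifting, and invoking naturality of the adjunction at the comparison map $f_m$ (the paper's $g_n$). The step you flag as the main bookkeeping point -- that $F$ applied to the lifted chain map computes $\varphi_X^F(\theta)$ on the resolution $F(P_\bullet)$ -- is handled the same way in the paper, which takes it as the definition of the induced homomorphism for an exact functor.
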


\begin{proof}
Let us start by fixing a natural bijection
\begin{center}
\begin{tikzpicture}
\diagram{d}{3em}{3em}{
\Hom_{\A} \left ( X,G(Y) \right ) & \Hom_{\B} \left ( F(X),Y \right ) \\
 };
\path[->, font = \scriptsize, auto]
(d-1-1) edge node{$\sigma_{X,Y}$} (d-1-2);
\end{tikzpicture}
\end{center}
for each object $X \in \A$ and $Y \in \B$; by \cite[Theorem IV.1.3]{MacLane2}, these bijections are isomorphisms of abelian groups. Note that in addition to being exact, the functor $F$ preserves projective objects, since it is left adjoint to an exact functor. Namely, if $P$ is a projective object in $\A$, then the functor $\Hom_{\B} \left ( F(P),- \right )$, which is naturally isomorphic to $\Hom_{\A} \left ( P, G(-) \right )$, is exact, since the latter functor is the composition of the two exact functors $G$ and $\Hom_{\A} \left ( P,- \right )$. Hence $F(P)$ is a projective object in $\B$.

Now fix objects $X \in \A$ and $Y \in \B$. Furthermore, fix a projective resolution $(P_*, d_*)$ of $X$ in $\A$; by the above, the sequence $(F(P_*),F(d_*))$ is then a projective resolution of $F(X)$ in $\B$. Applying $\Hom_{\A} (-,G(Y))$ to the former, and $\Hom_{\B} (-,Y )$ to the latter, we obtain a diagram
\begin{center}
\begin{tikzpicture}
\diagram{d}{3em}{3em}{
\cdots & {_{\A}( P_{n-1},G(Y) )} & {_{\A}(P_n,G(Y))} & {_{\A}(P_{n+1},G(Y))} & \cdots \\
\cdots & {_{\B}( F(P_{n-1}),Y )} & {_{\B}( F(P_n),Y )} & {_{\B}( F(P_{n+1}),Y )} & \cdots \\
 };
\path[->, font = \scriptsize, auto]
% horizontal edges
(d-1-1) edge (d-1-2)
(d-1-2) edge node{$d_n^*$} (d-1-3)
(d-1-3) edge node{$d_{n+1}^*$} (d-1-4)
(d-1-4) edge (d-1-5)
(d-2-1) edge (d-2-2)
(d-2-2) edge node{$F(d_n)^*$} (d-2-3)
(d-2-3) edge node{$F(d_{n+1})^*$} (d-2-4)
(d-2-4) edge (d-2-5)
%vertical edges
(d-1-2) edge node{$\sigma_{P_{n-1},Y}$} (d-2-2)
(d-1-3) edge node{$\sigma_{P_n,Y}$} (d-2-3)
(d-1-4) edge node{$\sigma_{P_{n+1},Y}$} (d-2-4);
\end{tikzpicture}
\end{center}
which is commutative since the vertical adjoint isomorphisms are natural (in the diagram, we have abbreviated $\Hom_{\A} (P_i,G(Y))$ to ${_{\A}(P_i,G(Y))}$ in the upper row, and similarly in the lower row). The isomorphism of complexes now gives
\begin{eqnarray*}
\Ext_{\A}^n \left ( X,G(Y) \right ) & = & \Homol^n \left ( \Hom_{\A} (P_*,G(Y)) \right ) \\
& \simeq & \Homol^n \left ( \Hom_{\B} (F(P_*),Y) \right ) \\
& = & \Ext_{\B}^n \left ( F(X),Y \right )
\end{eqnarray*}
for each $n$, with the isomorphism induced by $\sigma_{P_n,Y}$. We thus obtain an isomorphism
\begin{center}
\begin{tikzpicture}
\diagram{d}{3em}{3em}{
\Ext_{\A}^* \left ( X, G(Y) \right ) & \Ext_{\B}^* \left ( F(X), Y\right ) \\
 };
\path[->, font = \scriptsize, auto]
(d-1-1) edge node{$\tau$} (d-1-2);
\end{tikzpicture}
\end{center}
of graded abelian groups. Explicitly, a homogeneous element $\eta \in \Ext_{\A}^* \left ( X, G(Y) \right )$ of degree $n$, represented by a map $f_\eta \colon P_n \to G(Y)$, is mapped to the degree $n$ element of $\Ext_{\B}^* \left ( F(X), Y\right )$ represented by the map $\sigma_{P_n,Y}(f_{\eta}) \colon F(P_n) \to Y$.

\sloppy It remains to show that $\tau$ is an isomorphism of graded right $\Ext_{\A}^* \left ( X,X \right )$-modules. Take the element $\eta \in \Ext_{\A}^n \left ( X,G(Y) \right )$ above, and an element $\theta \in \Ext_{\A}^m \left ( X,X \right )$ for some $m \ge 0$. Furthermore, represent the latter by a map $g_{\theta} \colon P_m \to X$. Lifting $g_{\theta}$ along $(P_*,d_*)$, we obtain maps $g_i \colon P_{m+i} \to P_i$, and a commutative diagram
\begin{center}
\begin{tikzpicture}
\diagram{d}{3em}{3em}{
\cdots & P_{m+n+1} & P_{m+n} & P_{m+n-1} & \cdots \\
\cdots & P_{n+1} & P_n & P_{n-1} & \cdots \\
&& G(Y) \\
 };
\path[->, font = \scriptsize, auto]
(d-1-1) edge (d-1-2)
(d-1-2) edge node{$d_{m+n+1}$} (d-1-3)
(d-1-2) edge node{$g_{n+1}$} (d-2-2)
(d-1-3) edge node{$d_{m+n}$} (d-1-4)
(d-1-3) edge node{$g_n$} (d-2-3)
(d-1-4) edge (d-1-5)
(d-1-4) edge node{$g_{n-1}$} (d-2-4)
(d-2-1) edge (d-2-2)
(d-2-2) edge node{$d_{n+1}$} (d-2-3)
(d-2-3) edge node{$d_n$} (d-2-4)
(d-2-3) edge node{$f_{\eta}$} (d-3-3)
(d-2-4) edge (d-2-5);
\end{tikzpicture}
\end{center}
The map $f_{\eta} \circ g_n$ represents the element $\eta \circ \theta$, and so $\sigma_{P_{m+n},Y}( f_{\eta} \circ g_n )$ represents $\tau ( \eta \circ \theta )$. Now $F( g_{\theta} )$ represents the element $\varphi_X^F ( \theta )$, and we obtain a lifting of this map along the projective resolution $( F(P_*),F(d_*) )$ by applying $F$ to the $g_i$. Hence the map $\sigma_{P_n,Y}( f_{\eta} ) \circ F(g_n)$ represents the element $\tau ( \eta ) \circ \varphi_X^F ( \theta )$. But the naturality of $\sigma_{P_n,Y}$, applied to the map $g_n \colon P_{m+n} \to P_n$, gives a commutative diagram
\begin{center}
\begin{tikzpicture}
\diagram{d}{3em}{4em}{
\Hom_{\A} \left ( P_n, G(Y) \right ) & \Hom_{\B} \left ( F(P_n), Y \right ) \\
\Hom_{\A} \left ( P_{m+n}, G(Y) \right ) & \Hom_{\B} \left ( F(P_{m+n}), Y \right ) \\
 };
\path[->, font = \scriptsize, auto]
(d-1-1) edge node{$\sigma_{P_n,Y}$} (d-1-2)
(d-2-1) edge node{$\sigma_{P_{m+n},Y}$} (d-2-2)
(d-1-1) edge node{$g_n^*$} (d-2-1)
(d-1-2) edge node{$F(g_n)^*$} (d-2-2);
\end{tikzpicture}
\end{center}
Tracing $f_{\eta}$, we see that $\sigma_{P_{m+n},Y}( f_{\eta} \circ g_n ) = \sigma_{P_n,Y}( f_{\eta} ) \circ F(g_n)$, hence $\tau ( \eta \circ \theta ) = \tau ( \eta ) \circ \varphi_X^F ( \theta )$. This shows that $\tau$ is an isomorphism of graded right $\Ext_{\A}^* \left ( X,X \right )$-modules.
\end{proof}

\section{Separably equivalent algebras}\label{sec:separably}

In this section, we apply Theorem \ref{thm:adjointExt} to bimodules over algebras, and show that finite generation of cohomology transfers between algebras that are linked in a certain way. Let us fix a commutative Noetherian ring $k$, together with two Noetherian $k$-algebras $A$ and $B$; thus there exist ring homomorphisms from $k$ to the centers of $A$ and $B$, through which these rings are finitely generated as $k$-modules. We make the further assumption that both $A$ and $B$ are projective as $k$-modules. Finally, all modules are assumed to be finitely generated left modules, unless otherwise specified. We denote by $\mod A$ the category of finitely generated left $A$-modules.

\begin{definition}\cite{BerghErdmann, Kadison, Linckelmann}
The algebra $B$ \emph{separably divides} the algebra $A$ if there exist bimodules ${_AU_B}$ and ${_BV_A}$ -- projective on both sides -- with the property that $B$ is a direct summand of $V \ot_A U$ as a $B$-bimodule. If there exists such a pair of bimodules such that in addition $A$ is a direct summand of $U \ot_B V$ as an $A$-bimodule, then $A$ and $B$ are \emph{separably equivalent}.
\end{definition}

\begin{remark}
Of course, if $A$ and $B$ are separably equivalent, then in particular $B$ separably divides $A$, and $A$ separably divides $B$. But the converse is also true. For suppose that $B$ separably divides $A$ through bimodules ${_AU_B}$ and ${_BV_A}$, and that $A$ separably divides $B$ through bimodules ${_AU'_B}$ and ${_BV'_A}$. Then by taking $X = U \oplus U'$ and $Y = V \oplus V'$, we see that $B$ is a direct summand of $Y \otimes_A X$ as a $B$-bimodule, and that $A$ is a direct summand of $X \otimes_B Y$ as an $A$-bimodule. This simple fact escaped the present author in \cite{BerghErdmann}; see the top of page 2507.
\end{remark}

Denote the enveloping algebra $A \ot_k A^{\op}$ of $A$ by $A^e$; left modules over this algebra are the same as bimodules over $A$. Furthermore, denote by $\Hoch^*(A)$ the Hochschild cohomology ring $\Ext_{A^e}^*(A,A)$; since $A$ is projective as a $k$-module, this definition of the Hochschild cohomology ring agrees with the original definition of Hochschild, up to isomorphism. By a classical result of Gerstenhaber, this is a graded-commutative ring. Now for every $A$-module $M$, the tensor product $- \ot_A M$ induces a homomorphism
\begin{center}
\begin{tikzpicture}
\diagram{d}{3em}{3em}{
\Hoch^*(A) & \Ext_A^*(M,M) \\
 };
\path[->, font = \scriptsize, auto]
(d-1-1) edge node{$\varphi_M$} (d-1-2);
\end{tikzpicture}
\end{center}
of graded $k$-algebras. Thus if $N$ is another $A$-module, then $\Ext_A^*(M,N)$ becomes a left $\Hoch^*(A)$-module via $\varphi_N$ and the Yoneda product, and a right $\Hoch^*(A)$-module via $\varphi_M$ and the Yoneda product. However, by \cite[Corollary 1.3]{SnashallSolberg}, the left and the right module actions coincide up to a sign, for homogeneous elements. That is, if $\eta \in \Hoch^m(A)$ and $\theta \in \Ext_A^n(M,N)$, then
$$\varphi_N ( \eta ) \circ \theta = (-1)^{nm} \theta \circ \varphi_M ( \eta )$$
as an element of $\Ext_A^{m+n}(M,N)$.

\begin{definition}
(1) The algebra $A$ satisfies \textbf{Fg} if the following hold: the Hochschild cohomology ring $\Hoch^*(A)$ is Noetherian, and $\Ext_A^*(M,M)$ is a finitely generated $\Hoch^*(A)$-module for every $A$-module $M$.

(2) The algebra $A$ satisfies \textbf{Fgb} if the following hold: the Hochschild cohomology ring $\Hoch^*(A)$ is Noetherian, and $\Ext_{A^e}^*(A,X)$ is a finitely generated right $\Hoch^*(A)$-module for every $A$-bimodule $X$ (where the module structure is defined via the Yoneda product).
\end{definition}

Note that for the assumption \textbf{Fg}, the requirement that $\Ext_A^*(M,M)$ is a finitely generated $\Hoch^*(A)$-module for every $A$-module $M$ is equivalent to requiring that $\Ext_A^*(M,N)$ is finitely generated over $\Hoch^*(A)$ for all pairs of $A$-modules $M,N$. This follows from the simple fact that $\Ext_A^*(M,N)$ is a direct summand of $\Ext_A^*(M \oplus N,M \oplus N)$ as a module over $\Hoch^*(A)$. This finite generation assumption was central in \cite{EHSST}, where it was used to explore homological properties for the support varieties that one can attach to $A$-modules, using the maximal ideal spectrum of the Hochschild cohomology ring. For the assumption \textbf{Fgb}, the letter <<b>> indicates that we are looking at bimodules. In general, there does not seem to be a connection between the two finiteness assumptions, unless we impose some restrictions on the ground ring $k$.

\begin{lemma}\label{lem:finitenessassumptions}
Let $k$ be a commutative Noetherian ring, and $A$ a Noetherian $k$-algebra which is projective as a $k$-module.

\emph{(1)} If $k$ is semisimple and $A$ satisfies \emph{\textbf{Fgb}}, then it also satisfies \emph{\textbf{Fg}}.

\emph{(2)} Suppose that $k$ is a field and that $A / \mathfrak{r} \otimes_k A / \mathfrak{r}$ is semisimple, where $\mathfrak{r}$ is the radical of $A$. Then $A$ satisfies \emph{\textbf{Fgb}} if and only if it satisfies \emph{\textbf{Fg}}.
\end{lemma}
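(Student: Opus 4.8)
The plan is to reduce both implications to a single bridge: comparing $\Ext_A^*(M,N)$ for $A$-modules with $\Ext_{A^e}^*(A,X)$ for $A$-bimodules, using the fact that over a suitable ground ring the enveloping algebra $A^e = A \ot_k A^{\op}$ behaves well. The key elementary observation is that $\Ext_A^*(M,N) \simeq \Ext_{A^e}^*(A, \Hom_k(M,N))$ whenever $\Hom_k(M,N)$ is a reasonable bimodule and $M$ is projective (or at least has a suitable resolution) over $k$; this is the standard identification of module-level Ext with bimodule-level Ext, and it is an isomorphism of right $\Hoch^*(A)$-modules, since on both sides the $\Hoch^*(A)$-action comes from $\varphi_M$ and the Yoneda product. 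Dually, given an $A$-bimodule $X$ which is projective as a one-sided module, $\Ext_{A^e}^*(A,X)$ can be recovered from module-level Ext groups, because $X$ restricted through appropriate functors decomposes in terms of $\Ext_A^*$ of its "rows" and "columns." The semisimplicity hypotheses on $k$ (part (1)) or on $A/\mathfrak{r} \ot_k A/\mathfrak{r}$ (part (2)) are exactly what is needed to make $\Hom_k$ and the relevant tensor products exact, and to ensure that the bimodules occurring are direct sums of the building blocks $\Hom_k(M,N)$.

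For part (1), I would argue as follows. Assume $k$ semisimple and $A$ satisfies \textbf{Fgb}. Since $k$ is semisimple, every $k$-module is projective, so for any two $A$-modules $M,N$ the $k$-module $\Hom_k(M,N)$ is projective, hence $\Hom_k(M,N)$ is a bimodule projective on both sides (it is $M^* \ot_k N$ with $M^*$ projective). The Eckmann-Shapiro-type identification — which one can deduce from Theorem~\ref{thm:adjointExt} applied to the restriction/induction adjunction along $A^e \to A$, or directly — gives an isomorphism of graded right $\Hoch^*(A)$-modules
$$\Ext_A^*(M,N) \simeq \Ext_{A^e}^*\bigl(A, \Hom_k(M,N)\bigr).$$
The right-hand side is finitely generated over $\Hoch^*(A)$ by \textbf{Fgb} (taking $X = \Hom_k(M,N)$), hence so is the left-hand side. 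Since $\Hoch^*(A)$ is Noetherian by hypothesis, $A$ satisfies \textbf{Fg}. Taking $N = M$ gives exactly the required statement.

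For part (2), with $k$ a field and $A/\mathfrak{r} \ot_k A/\mathfrak{r}$ semisimple, the forward direction $\textbf{Fgb} \Rightarrow \textbf{Fg}$ is a special case of part (1), since a field is semisimple. The substantive direction is $\textbf{Fg} \Rightarrow \textbf{Fgb}$: given an arbitrary $A$-bimodule $X$, I must show $\Ext_{A^e}^*(A,X)$ is finitely generated over $\Hoch^*(A)$. The idea is to resolve $X$ by "nice" bimodules. The semisimplicity of $A/\mathfrak{r} \ot_k A/\mathfrak{r} = A^e / (\text{radical})$ means $A^e$ is a semiperfect (indeed Artinian) algebra whose simple bimodules are tensor products of simple $A$-modules with simple $A^{\op}$-modules; each such simple is a quotient of some $\Hom_k(S,T)$ with $S,T$ simple $A$-modules, and more generally any finitely generated bimodule has a finite filtration, or a projective cover, built from bimodules of the form $P \ot_k Q$ with $P,Q$ one-sided projectives. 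Using a projective $A^e$-resolution of $X$ whose terms are finite direct sums of $A \ot_k A$ — or a filtration argument via the short exact sequences $0 \to \mathfrak{r}X \to X \to X/\mathfrak{r}X \to 0$ and long exact sequences in $\Ext_{A^e}^*(A,-)$ — one reduces to the case $X = \Hom_k(M,N)$, where the isomorphism above and \textbf{Fg} give finite generation. The long exact sequence argument requires the two-out-of-three property for finite generation over the Noetherian ring $\Hoch^*(A)$, which is standard.

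The main obstacle is the direction $\textbf{Fg} \Rightarrow \textbf{Fgb}$ in part (2): one must control $\Ext_{A^e}^*(A,X)$ for a completely arbitrary bimodule $X$, not just those of the form $\Hom_k(M,N)$. The hypothesis $A/\mathfrak{r} \ot_k A/\mathfrak{r}$ semisimple is precisely the lever that lets every bimodule be assembled — via finite filtrations with semisimple subquotients, or via projective covers over the semiperfect algebra $A^e$ — from the controlled pieces, and checking that finite generation propagates through these assembly steps (using Noetherianity of $\Hoch^*(A)$ and naturality of the $\Hoch^*(A)$-action in the long exact sequences) is where the real care is needed.
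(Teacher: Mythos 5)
Your proposal follows essentially the same route as the paper: part (1) via the Cartan--Eilenberg isomorphism $\Ext_{A^e}^*(A,\Hom_k(M,N))\simeq\Ext_A^*(M,N)$ over a semisimple ground ring, and part (2) via reducing an arbitrary bimodule, through a finite filtration with semisimple subquotients and long exact sequences over the Noetherian ring $\Hoch^*(A)$, to simple bimodules of the form $\Hom_k(S,T)$ --- exactly the EHSST-style argument the paper invokes. Two small touch-ups: filter by the radical of $A^e$ (use $\mathfrak{r}X + X\mathfrak{r}$ rather than $\mathfrak{r}X$ alone, so that the subquotients are genuinely semisimple as bimodules), and discard the alternative reduction via a projective $A^e$-resolution of $X$ with terms $A\otimes_k A$, which does not terminate; the filtration argument is the one that carries the proof.
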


\begin{proof}
When the ring $k$ is semisimple, then by \cite[Corollary IX.4.4]{CartanEilenberg} there is an isomorphism $\Ext_{A^e}^*(A, \Hom_k(M,N)) \simeq \Ext_A^*(M,N)$ for all $A$-modules $M$ and $N$. Thus if \textbf{Fgb} holds, then so does \textbf{Fg}. For the second part, note that by \cite[Lemma 7.6]{PsaroudakisSkartsaeterhagenSolberg}, the assumption implies that all the simple $A$-bimodules appear as summands of $A / \mathfrak{r} \otimes_k A / \mathfrak{r}$. It follows from this that every simple $A$-bimodule is of the form $\Hom_k(S,T)$, where $S$ and $T$ are simple $A$-modules. The argument from the proof of \cite[Proposition 2.4]{EHSST} now carries over.
\end{proof}

\begin{remark}\label{rem:conditionssatisfied}
(1) When $k$ is an algebraically closed field, then $A / \mathfrak{r} \otimes_k A / \mathfrak{r}$ is automatically semisimple, by the classical theorem of Wedderburn-Artin. The same conclusion holds when $k$ is a perfect field, by \cite[Lemma 5.3.8 and Corollary 5.3.10]{Zimmermann}. However, there are many other settings in which the same holds, as pointed out in \cite[paragraph following Example 7.7]{PsaroudakisSkartsaeterhagenSolberg}. For example, it holds when $A$ is the quotient of a path algebra by an admissible ideal, regardless of the ground field $k$. It also holds when $A / \mathfrak{r}$ is a separable $k$-algebra.

(2) Since the ring $\Hoch^*(A)$ is graded-commutative, it is Noetherian if and only if it is right Noetherian. Hence the assumption \textbf{Fgb} is equivalent to the following: the right $\Hoch^*(A)$-module $\Ext_{A^e}^*(A,X)$ is Noetherian for every $A$-bimodule $X$.

(3) When discussing the assumption \textbf{Fgb}, then given an $A$-bimodule $X$, we have been viewing $\Ext_{A^e}^*(A,X)$ as a right $\Hoch^*(A)$-module, using the Yoneda product. However, as for one-sided modules, it is also a left $\Hoch^*(A)$-module, by using the tensor product $- \ot_A X$ followed by the Yoneda product. By \cite[Theorem 1.1]{SnashallSolberg}, the left and the right actions coincide up to a sign, for homogeneous elements.
\end{remark}

We shall prove that if the algebra $A$ satisfies \textbf{Fgb}, and the algebra $B$ separably divides $A$, then $B$ also satisfies \textbf{Fgb}. In particular, if $A$ and $B$ are separably equivalent, then $A$ satisfies \textbf{Fgb} if and only if $B$ does. This was proved in \cite{Linckelmann} for symmetric separably equivalent algebras, but, as we shall see, it holds in full generality. The proof is modelled on that of \cite{Linckelmann}, but with appropriate modifications throughout. 

We start with an elementary lemma on isomorphisms of certain bimodules involving $\Hom$s and tensor products. Note first that if ${_AU_B}$ and ${_AW_B}$ are bimodules, then $\Hom_{B^{\op}}(U,B)$ is a $B$-$A$-bimodule, and $\Hom_{B^{\op}}(U,W)$ is an $A$-bimodule; the former by $(b \cdot h \cdot a)(u) = bh(au)$, and the latter by $(a \cdot g \cdot a')(u) = ag(a'u)$. Similarly, if ${_BV_A}$ and ${_BY_B}$ are bimodules, then $\Hom_B(V,Y)$ is an $A$-$B$-bimodule by $(a \cdot f \cdot b)(v) = f(va)b$. Finally, we denote the $B$-$A$-bimodule $\Hom_{B^{\op}}(U,B)$ by $U^*$, and the $A$-$B$-bimodule $\Hom_B(V,B)$ by ${^*V}$.

\begin{lemma}\label{lem:hom-tensor-iso}
If ${_AU_B}$ and ${_BV_A}$ are bimodules, both projective as $B$-modules, then the following hold.

\emph{(1)} For every bimodule ${_AW_B}$, there is an isomorphism
\begin{center}
\begin{tikzpicture}
\diagram{d}{3em}{3em}{
W \otimes_B U^* & \Hom_{B^{\op}}(U,W) \\
 };
\path[->, font = \scriptsize, auto]
(d-1-1) edge node{$\psi$} (d-1-2);
\end{tikzpicture}
\end{center}
of $A$-bimodules given by the linear extension of $w \otimes h \mapsto m_{w,h}$ for $w \in W$ and $h \in U^*$, with $m_{w,h}(u) = w \cdot h(u)$ for $u \in U$. Furthermore, this isomorphism is natural in $W$.

\emph{(2)} For every bimodule ${_BY_B}$, there is an isomorphism
\begin{center}
\begin{tikzpicture}
\diagram{d}{3em}{3em}{
{^*V} \otimes_B Y & \Hom_B(V,Y) \\
 };
\path[->, font = \scriptsize, auto]
(d-1-1) edge node{$\rho$} (d-1-2);
\end{tikzpicture}
\end{center}
of $A$-$B$-bimodules given by the linear extension of $\alpha \otimes y \mapsto w_{\alpha,y}$ for $\alpha \in {^*V}$ and $y \in Y$, with $w_{\alpha,y}(v) = \alpha (v) \cdot y$ for $v \in V$. Furthermore, this isomorphism is natural in $Y$.
\end{lemma}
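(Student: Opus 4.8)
The plan is to prove the two isomorphisms by exhibiting explicit inverse maps and checking bimodule-linearity and naturality, rather than invoking a general Hom-tensor adjunction theorem, since the projectivity hypotheses are exactly what is needed to upgrade the evident natural transformations to isomorphisms. First I would treat part (1). The assignment $w \otimes h \mapsto m_{w,h}$ with $m_{w,h}(u) = w\cdot h(u)$ is clearly $k$-bilinear and $B$-balanced (for $b \in B$, $w \otimes bh$ and $wb \otimes h$ both go to $u \mapsto wb\,h(u)$, using that the right $B$-action on $U^* = \Hom_{B^{\op}}(U,B)$ is $(bh)(u) = b\,h(u)$), so $\psi$ is a well-defined additive map. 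Then I would check it respects the two $A$-actions: with the stated bimodule structures, $(a \cdot g \cdot a')(u) = a\,g(a'u)$ on $\Hom_{B^{\op}}(U,W)$, and on $W \otimes_B U^*$ the outer $A$ acts on $W$ on the left while the other $A$ acts through $U^*$ by $(h\cdot a')(u) = h(a'u)$; tracing through, $\psi(a\cdot(w\otimes h)\cdot a') = \psi(aw \otimes (h\cdot a'))$ sends $u$ to $aw\cdot h(a'u) = a\big(w\cdot h(a'u)\big) = (a\cdot m_{w,h}\cdot a')(u)$, so $\psi$ is $A$-bihomomorphism. Naturality in $W$ is immediate since everything is defined pointwise.

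For bijectivity in part (1) I would use the projectivity of $U$ as a right $B$-module. Both $W \otimes_B -$ and $\Hom_{B^{\op}}(-,W)$ are additive functors on right $B$-modules, $\psi$ is a natural transformation between them (naturality in the $U$-slot is again pointwise), and for $U = B$ the map $W \otimes_B B^* \to \Hom_{B^{\op}}(B,B)=W$ is the canonical isomorphism $w \otimes h \mapsto w\,h(1)$ — here $B^* = \Hom_{B^{\op}}(B,B) \cong B$. Hence $\psi$ is an isomorphism whenever $U$ is free of finite rank, and therefore, by taking direct summands of such (using that both functors are additive and $\psi$ is compatible with finite direct sums in $U$), whenever $U$ is finitely generated projective as a right $B$-module, which is our hypothesis. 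Part (2) is entirely parallel: $\rho \colon {}^*V \otimes_B Y \to \Hom_B(V,Y)$, $\alpha \otimes y \mapsto \big(v \mapsto \alpha(v)\cdot y\big)$, is $B$-balanced and $A$-$B$-bilinear for the stated structures $(a\cdot f\cdot b)(v) = f(va)b$, natural in $Y$, and an isomorphism because it is a natural transformation between the additive functors ${}^*V \otimes_B -$ and $\Hom_B(V,-)$ on $B$-bimodules (equivalently, left-$B$-modules in the relevant slot after fixing the outer structures) that reduces to the canonical isomorphism when $V = B$, whence it is an isomorphism for $V$ finitely generated projective as a left $B$-module by passing to finite free modules and their summands.

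The main obstacle — really the only thing requiring care — is the bookkeeping of the several bimodule structures: one must be scrupulous about which copy of $A$ or $B$ acts through which tensor factor, and about the conventions $(bh)(u)=b\,h(u)$ on $U^*$ versus $(h\cdot a')(u)=h(a'u)$, so that the "outer" $A$-action in $W\otimes_B U^*$ lands on $W$ while the "inner" one is fed into the argument of $h$, matching $(a\cdot g\cdot a')(u)=a\,g(a'u)$ on the $\Hom$ side. Once the $U=B$ (respectively $V=B$) base case and additivity in the relevant variable are in hand, the reduction to finitely generated projectives is routine, so I would state the linearity and naturality verifications briefly and spend the words on getting the base case and the summand argument right.
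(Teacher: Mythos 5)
Your proposal is correct and follows essentially the same route as the paper: define $\psi$ (resp.\ $\rho$) explicitly, verify $B$-balancedness, the $A$-bimodule (resp.\ $A$-$B$-bimodule) compatibility and naturality directly, and prove bijectivity by reducing to the base case $U=B$ (resp.\ $V=B$) and then passing to finite free modules and their direct summands, using that $U$ and $V$ are (finitely generated) projective over $B$. The only blemish is the typo ``$\Hom_{B^{\op}}(B,B)=W$'' in your base case, which should read $\Hom_{B^{\op}}(B,W)\cong W$; the surrounding argument makes clear this is what you meant.
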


\begin{proof}
We only prove (1); the proof of (2) is similar. Note first that given $w$ and $h$ as in the statement, the map $m_{w,h}$ really is a homomorphism of $B^{\op}$-modules, since $h$ is. Furthermore, the map $\psi$ is well defined; given $b \in B$, it is straightforward to check that $m_{w \cdot b,h} = m_{w,b \cdot h}$, hence the map 
\begin{center}
\begin{tikzpicture}
\diagram{d}{3em}{3em}{
W \times U^* & \Hom_{B^{\op}}(U,W) \\
 };
\path[->, font = \scriptsize, auto]
(d-1-1) edge (d-1-2);
\end{tikzpicture}
\end{center}
given by $(w,h) \mapsto m_{w,h}$ is $B$-balanced.

To show that $\psi$ is an isomorphism of abelian groups, we disregard the left $A$-module structure of $U$ (and $W$). Replacing $U$ with the right $B$-module $B_B$, we obtain a commutative diagram
\begin{center}
\begin{tikzpicture}
\diagram{d}{3em}{3em}{
W \otimes_B \Hom_{B^{\op}}(B_B,B) & \Hom_{B^{\op}}(B_B,W) \\
W \otimes_B B & W \\
 };
\path[->, font = \scriptsize, auto]
(d-1-1) edge node{$\psi$} (d-1-2)
(d-1-1) edge (d-2-1)
(d-1-2) edge (d-2-2)
(d-2-1) edge (d-2-2);
\end{tikzpicture}
\end{center}
in which the vertical maps and the bottom map are the canonical isomorphisms. Thus $\psi$ is an isomorphism in this case, and the argument extends by first replacing $B_B$ with a free right $B$-module, and finally with a projective right $B$-module. This shows that $\psi$ is an isomorphism of abelian groups.

Now take any $w \in W, h \in U^*$ and $a,a' \in A$. By definition, for every $u \in U$ we obtain
\begin{eqnarray*}
m_{a \cdot w, h \cdot a'}(u) & = & (aw) \cdot \left ( (h \cdot a')(u) \right ) \\
& = & (aw) \cdot h(a'u) \\
& = & a \left ( w \cdot h(a'u) \right ) \\
& = & a m_{w,h}(a'u) \\
& = & \left ( a \cdot m_{w,h} \cdot a' \right ) (u)
\end{eqnarray*}
hence $m_{a \cdot w, h \cdot a'} = a \cdot m_{w,h} \cdot a'$. Consequently, the isomorphism $\psi$ is one of $A$-bimodules. Finally,  a straightforward verification shows that it is also natural in $W$; for a homomorphism $W \to W'$ of $A$-$B$-bimodules, one uses its $B$-linearity to see this. 
\end{proof}

The next result establishes an adjoint pair of functors between $A$-bimodules and $B$-bimodules.

\begin{proposition}\label{prop:adjoint}
\sloppy Suppose that ${_AU_B}$ and ${_BV_A}$ are bimodules which are projective as $B$-modules, and consider the functors 
\begin{center}
\begin{tikzpicture}
\diagram{d}{3em}{5em}{
\mod A^e & \mod B^e \\
 };
\path[->, font = \scriptsize, auto]
(d-1-1) edge[bend left] node{$V \otimes_A - \otimes_A U$} (d-1-2)
(d-1-2) edge[bend left] node{${^*V} \otimes_B - \otimes_B  U^*$} (d-1-1);
\end{tikzpicture}
\end{center}
Then $\left ( V \otimes_A - \otimes_A U, {^*V} \otimes_B - \otimes_B  U^* \right )$ is an adjoint pair; given bimodules ${_AX_A}$ and ${_BY_B}$, there is an isomorphism
\begin{center}
\begin{tikzpicture}
\diagram{d}{3em}{3em}{
\Hom_{A^e} \left ( X, {^*V} \otimes_B Y \otimes_B  U^* \right ) & \Hom_{B^e} \left ( V \otimes_A X \otimes_A U, Y \right ) \\
 };
\path[->, font = \scriptsize, auto]
(d-1-1) edge node{$\sigma_{X,Y}$} (d-1-2);
\end{tikzpicture}
\end{center}
of abelian groups, natural in both $X$ and $Y$.
\end{proposition}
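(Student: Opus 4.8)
The plan is to rewrite the second functor as a $\Hom$-functor by means of Lemma \ref{lem:hom-tensor-iso}, and then to obtain the adjunction isomorphism by currying. (That the two displayed functors take finitely generated modules to finitely generated modules follows from the standing finiteness assumptions on $A$, $B$ and on the bimodules $U$, $V$; we concentrate on the adjunction.)

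Fix a $B$-bimodule $Y$. Lemma \ref{lem:hom-tensor-iso}(2), applied to $Y$, gives an isomorphism ${}^*V \otimes_B Y \xrightarrow{\ \sim\ } \Hom_B(V,Y)$ of $A$-$B$-bimodules, natural in $Y$. Tensoring on the right with $U^*$ over $B$ and then applying Lemma \ref{lem:hom-tensor-iso}(1) to the $A$-$B$-bimodule $W = \Hom_B(V,Y)$, we obtain natural isomorphisms
\[
{}^*V \otimes_B Y \otimes_B U^* \;\xrightarrow{\ \sim\ }\; \Hom_B(V,Y) \otimes_B U^* \;\xrightarrow{\ \sim\ }\; \Hom_{B^{\op}}\!\bigl(U,\Hom_B(V,Y)\bigr)
\]
of $A$-bimodules; naturality in $Y$ comes from that of the first map together with the naturality in $W$ of the second. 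Here the $A$-bimodule structure on the target is the one produced by Lemma \ref{lem:hom-tensor-iso}(1), namely $(a\cdot g\cdot a')(u) = a\cdot g(a'u)$, where $a\cdot(-)$ is the left $A$-action $(a\cdot f)(v)=f(va)$ on $\Hom_B(V,Y)$; equivalently $(a\cdot g\cdot a')(u)(v) = g(a'u)(va)$.

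It thus remains to produce a natural isomorphism
\[
\Hom_{A^e}\!\bigl(X,\Hom_{B^{\op}}(U,\Hom_B(V,Y))\bigr) \;\simeq\; \Hom_{B^e}\!\bigl(V\otimes_A X\otimes_A U, Y\bigr).
\]
To a morphism $\theta$ on the left I assign the $k$-trilinear map $V\times X\times U \to Y$, $(v,x,u)\mapsto \theta(x)(u)(v)$. Using the explicit bimodule structure above, the $A^e$-linearity of $\theta$ --- that is, $\theta(axa') = a\cdot\theta(x)\cdot a'$ --- is exactly equivalent to the identities $\theta(ax)(u)(v) = \theta(x)(u)(va)$ and $\theta(xa')(u)(v) = \theta(x)(a'u)(v)$, which say precisely that this trilinear map is $A$-balanced in the middle, hence descends to a $k$-linear map $\Theta\colon V\otimes_A X\otimes_A U \to Y$, $v\otimes x\otimes u \mapsto \theta(x)(u)(v)$. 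Further, the fact that each $\theta(x)(u)$ lies in $\Hom_B(V,Y)$ gives $\Theta(bv\otimes x\otimes u) = b\,\Theta(v\otimes x\otimes u)$, and the $B^{\op}$-linearity of each $\theta(x)$ gives $\Theta(v\otimes x\otimes ub) = \Theta(v\otimes x\otimes u)\,b$, so $\Theta \in \Hom_{B^e}(V\otimes_A X\otimes_A U, Y)$. Conversely, a $B^e$-linear $\Phi$ on the right defines $\theta$ via $\theta(x)(u)(v) = \Phi(v\otimes x\otimes u)$, and the same computations show this is well defined and inverse to $\theta\mapsto\Theta$. Both assignments are manifestly natural in $X$ and in $Y$. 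Composing with the isomorphism of the previous paragraph yields the desired $\sigma_{X,Y}$, natural in both arguments; being a composite of isomorphisms of abelian groups, it is one.

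I expect the only real difficulty to be organizational: one must keep careful track of the several left and right $B$-actions on $U$, $V$, $U^*$, ${}^*V$, $\Hom_B(V,Y)$ and $Y$ that are contracted at each stage, and in particular match the $A$-bimodule structure on $\Hom_{B^{\op}}(U,\Hom_B(V,Y))$ coming from Lemma \ref{lem:hom-tensor-iso} against the triple-evaluation description of $\theta$, so that the stated identities for $\Theta$ really do follow. Once the structures are fixed, every verification is a short direct check, and naturality is automatic from the naturality of the currying bijections and of the isomorphisms in Lemma \ref{lem:hom-tensor-iso}.
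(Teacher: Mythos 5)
Your proposal is correct and follows essentially the same route as the paper: rewrite ${}^*V\otimes_B Y\otimes_B U^*$ via Lemma \ref{lem:hom-tensor-iso} as $\Hom_{B^{\op}}\bigl(U,\Hom_B(V,Y)\bigr)$ and then pass to the adjunction by currying. The only differences are cosmetic -- you apply part (2) of the lemma before tensoring with $U^*$ and verify the double Hom-tensor adjunction by an explicit element-level computation, whereas the paper applies the two parts of the lemma in the other order and instead cites the bimodule Hom-tensor adjunction from Mac Lane twice -- and your bookkeeping of the $A$- and $B$-actions and the balancing/linearity checks are exactly the required ones.
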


\begin{proof}
We divide the proof into four steps, in each of these producing a natural isomorphism. The composition of these isomorphisms is then the map we seek.

(1) With the $A$-$B$-bimodule ${^*V} \otimes_B Y$ as $W$ in Lemma \ref{lem:hom-tensor-iso}(1), we obtain an isomorphism
\begin{center}
\begin{tikzpicture}
\diagram{d}{3em}{3em}{
{^*V} \otimes_B Y \otimes_B U^* & \Hom_{B^{\op}}(U,{^*V} \otimes_B Y) \\
 };
\path[->, font = \scriptsize, auto]
(d-1-1) edge node{$\psi$} (d-1-2);
\end{tikzpicture}
\end{center}
of $A$-bimodules, natural in $Y$. Applying $\Hom_{A^e}(X,-)$ to this isomorphism, we obtain the isomorphism
\begin{center}
\begin{tikzpicture}
\diagram{d}{3em}{3em}{
\Hom_{A^e} \left ( X, {^*V} \otimes_B Y \otimes_B U^* \right ) & \Hom_{A^e} \left ( X, \Hom_{B^{\op}}(U,{^*V} \otimes_B Y) \right ) \\
 };
\path[->, font = \scriptsize, auto]
(d-1-1) edge node{$\psi_*$} (d-1-2);
\end{tikzpicture}
\end{center}
Since $\psi$ is natural in $Y$, so is $\psi_*$. Moreover, the latter is trivially natural in $X$.

(2) Let ${_AW_B}$ be a bimodule, and consider $\Hom_{A^e} \left ( X, \Hom_{B^{\op}}(U,W) \right )$. The bimodule version of the Hom-tensor adjunction (cf.\ \cite[Corollary V.3.2]{MacLane1}) gives a natural isomorphism
\begin{center}
\begin{tikzpicture}
\diagram{d}{3em}{3em}{
\Hom_{A^e} \left ( X, \Hom_{B^{\op}}(U,W) \right ) & \Hom_{A \otimes_k B^{\op}} \left ( X \otimes_A U, W \right )  \\
 };
\path[->, font = \scriptsize, auto]
(d-1-1) edge (d-1-2);
\end{tikzpicture}
\end{center}
In particular, with $W = {^*V} \otimes_B Y$, we obtain an isomorphism
\begin{center}
\begin{tikzpicture}
\diagram{d}{3em}{3em}{
\Hom_{A^e} \left ( X, \Hom_{B^{\op}}(U,{^*V} \otimes_B Y) \right ) & \Hom_{A \otimes_k B^{\op}} \left ( X \otimes_A U, {^*V} \otimes_B Y \right )  \\
 };
\path[->, font = \scriptsize, auto]
(d-1-1) edge node{$\lambda$} (d-1-2);
\end{tikzpicture}
\end{center}
which is natural in $X$ and $Y$.

(3) This step is similar to (1). By Lemma \ref{lem:hom-tensor-iso}(2), there is an isomorphism
\begin{center}
\begin{tikzpicture}
\diagram{d}{3em}{3em}{
{^*V} \otimes_B Y & \Hom_B(V,Y) \\
 };
\path[->, font = \scriptsize, auto]
(d-1-1) edge node{$\rho$} (d-1-2);
\end{tikzpicture}
\end{center}
of $A$-$B$-bimodules, natural in $Y$. Applying $\Hom_{A \otimes_k B^{\op}} \left ( X \otimes_A U,- \right )$ to $\rho$, we obtain the isomorphism
\begin{center}
\begin{tikzpicture}
\diagram{d}{3em}{3em}{
\Hom_{A \otimes_k B^{\op}} \left ( X \otimes_A U, {^*V} \otimes_B Y \right ) & \Hom_{A \otimes_k B^{\op}} \left ( X \otimes_A U, \Hom_B(V,Y) \right ) \\
 };
\path[->, font = \scriptsize, auto]
(d-1-1) edge node{$\rho_*$} (d-1-2);
\end{tikzpicture}
\end{center}
which is also natural in $Y$. This isomorphism is trivially natural in $X$.

(4) This last step is similar to (2). Let ${_AW_B}$ be a bimodule, and consider $\Hom_{A \otimes_k B^{\op}} \left ( W, \Hom_B(V,Y) \right )$. Again, the (appropriate version of the) bimodule version of the Hom-tensor adjunction gives a natural isomorphism
\begin{center}
\begin{tikzpicture}
\diagram{d}{3em}{3em}{
\Hom_{A \otimes_k B^{\op}} \left ( W, \Hom_B(V,Y) \right ) & \Hom_{B^e} \left ( V \otimes_A W, Y \right ) \\
 };
\path[->, font = \scriptsize, auto]
(d-1-1) edge (d-1-2);
\end{tikzpicture}
\end{center}
Then with $W = X \otimes_A U$ we obtain an isomorphism
\begin{center}
\begin{tikzpicture}
\diagram{d}{3em}{3em}{
\Hom_{A \otimes_k B^{\op}} \left ( X \otimes_A U, \Hom_B(V,Y) \right ) & \Hom_{B^e} \left ( V \otimes_A X \otimes_A U, Y \right ) \\
 };
\path[->, font = \scriptsize, auto]
(d-1-1) edge node{$\mu$} (d-1-2);
\end{tikzpicture}
\end{center}
which is natural in $X$ and $Y$.

Finally, the composition $\mu \circ \rho_* \circ \lambda \circ \psi_*$ of the isomorphisms from the above four steps gives an isomorphism
\begin{center}
\begin{tikzpicture}
\diagram{d}{3em}{3em}{
\Hom_{A^e} \left ( X, {^*V} \otimes_B Y \otimes_B  U^* \right ) & \Hom_{B^e} \left ( V \otimes_A X \otimes_A U, Y \right ) \\
 };
\path[->, font = \scriptsize, auto]
(d-1-1) edge (d-1-2);
\end{tikzpicture}
\end{center}
which is also natural in $X$ and $Y$.
\end{proof}

When the bimodules $U$ and $V$ from the previous result are also projective as $A$-modules, then the two functors in the adjoint pair are exact. We can then apply Theorem \ref{thm:adjointExt}, and obtain the following.

\begin{corollary}\label{cor:ext-iso}
If ${_AU_B}, {_BV_A}, {_AX_A}$ and ${_BY_B}$ are bimodules, with $U$ and $V$ projective as one-sided modules, then the following hold.

\emph{(1)} The functor $V \otimes_A - \otimes_A U$, induces a homomorphism
\begin{center}
\begin{tikzpicture}
\diagram{d}{3em}{3em}{
\Ext_{A^e}^* \left ( X,X \right ) & \Ext_{B^e}^* \left ( V \otimes_A X \otimes_A U, V \otimes_A X \otimes_A U \right ) \\
 };
\path[->, font = \scriptsize, auto]
(d-1-1) edge node{$\varphi_X^{V,U}$} (d-1-2);
\end{tikzpicture}
\end{center}
of graded $k$-algebras.

\emph{(2)} There is an isomorphism
\begin{center}
\begin{tikzpicture}
\diagram{d}{3em}{3em}{
\Ext_{A^e}^* \left ( X, {^*V} \otimes_B Y \otimes_B  U^* \right ) & \Ext_{B^e}^* \left ( V \otimes_A X \otimes_A U, Y\right ) \\
 };
\path[->, font = \scriptsize, auto]
(d-1-1) edge node{$\tau$} (d-1-2);
\end{tikzpicture}
\end{center}
of graded $k$-modules. Furthermore, when we view $\Ext_{A^e}^* \left ( X, {^*V} \otimes_B Y \otimes_B  U^* \right )$ and $\Ext_{B^e}^* \left ( V \otimes_A X \otimes_A U, Y\right )$ as right $\Ext_{A^e}^* \left ( X,X \right ) $-modules -- the latter via the ring homomorphism $\varphi_X^{V,U}$ from \emph{(1)} -- then $\tau$ becomes an isomorphism of such.
\end{corollary}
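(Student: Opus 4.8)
The plan is to obtain both parts as a direct application of Theorem \ref{thm:adjointExt} to the adjoint pair constructed in Proposition \ref{prop:adjoint}, taking $\A = \mod A^e$ and $\B = \mod B^e$, with the functors $F = V \otimes_A - \otimes_A U$ and $G = {^*V} \otimes_B - \otimes_B U^*$. First I would check the standing hypotheses of that theorem. Both $\mod A^e$ and $\mod B^e$ are categories of finitely generated modules over Noetherian $k$-algebras (note that $A^e$ and $B^e$ are finitely generated as $k$-modules, hence Noetherian), so they are abelian with enough projective objects. By Proposition \ref{prop:adjoint}, $(F,G)$ is an adjoint pair, since $U$ and $V$ are projective as $B$-modules. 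It remains to see that $F$ and $G$ are exact --- this is the remark immediately preceding the statement: since $U$ is projective, hence flat, as a left $A$-module and $V$ is flat as a right $A$-module, both $- \otimes_A U$ and $V \otimes_A -$ are exact, so $F$ is exact; and since $U^* = \Hom_{B^{\op}}(U,B)$ is projective as a left $B$-module and ${^*V} = \Hom_B(V,B)$ is projective as a right $B$-module (both because $U$ and $V$ are projective as $B$-modules), the same reasoning shows $G$ is exact.

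With this in hand, part (1) is nothing but the graded ring homomorphism $\varphi_X^F$ from the paragraph preceding Theorem \ref{thm:adjointExt}, applied to $M = X$; I would simply rename it $\varphi_X^{V,U}$, noting that it is a map of graded $k$-algebras because $F$ is a $k$-linear functor. For part (2), Theorem \ref{thm:adjointExt} applied to $X \in \mod A^e$ and $Y \in \mod B^e$ yields an isomorphism $\tau \colon \Ext_{A^e}^*(X, G(Y)) \to \Ext_{B^e}^*(F(X), Y)$ of graded abelian groups which intertwines the right $\Ext_{A^e}^*(X,X)$-actions, the target being a module via $\varphi_X^F$. Substituting $G(Y) = {^*V} \otimes_B Y \otimes_B U^*$ and $F(X) = V \otimes_A X \otimes_A U$ gives exactly the asserted isomorphism.

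The one point not literally furnished by Theorem \ref{thm:adjointExt} is the refinement from graded abelian groups to graded $k$-modules. For this I would observe that the categories and all functors involved are $k$-linear, and that the adjunction isomorphism $\sigma_{X,Y}$ of Proposition \ref{prop:adjoint} is assembled from Hom-tensor adjunctions and from the maps $\psi$ and $\rho$ of Lemma \ref{lem:hom-tensor-iso}, all of which are $k$-linear; hence the map $\tau$ produced in the proof of Theorem \ref{thm:adjointExt} --- which carries the class of $f_\eta \colon P_n \to G(Y)$ to the class of $\sigma_{P_n,Y}(f_\eta) \colon F(P_n) \to Y$ --- is an isomorphism of graded $k$-modules.

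I do not anticipate a genuine obstacle here: the substance is already contained in Theorem \ref{thm:adjointExt} and Proposition \ref{prop:adjoint}, and this corollary is their formal consequence. The only place demanding a little attention is the bookkeeping with the projectivity hypotheses --- confirming that $U^*$ and ${^*V}$ inherit the one-sided projectivity needed for $G$ to be exact, and keeping track of which side of $U$ and $V$ each assumption is used on.
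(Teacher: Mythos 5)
Your proposal is correct and follows essentially the same route as the paper: both parts are obtained by feeding the adjoint pair $\bigl( V \otimes_A - \otimes_A U,\ {^*V} \otimes_B - \otimes_B U^* \bigr)$ from Proposition \ref{prop:adjoint} into Theorem \ref{thm:adjointExt}, with exactness of the first functor coming from $U$, $V$ being projective as $A$-modules and exactness of the second from $U^*$, ${^*V}$ being projective as $B$-modules (the paper cites Mac Lane for the latter; your side-by-side bookkeeping of which projectivity is used where is accurate). Your extra remark upgrading the isomorphism from graded abelian groups to graded $k$-modules via $k$-linearity of all the maps involved is a harmless and correct addition.
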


\begin{proof}
The proof of (1) just follows from the fact that the bimodules $U$ and $V$ are projective as $A$-modules, so that the functor $V \otimes_A - \otimes_A U$ is exact. The proof of (2) is a direct application of Theorem \ref{thm:adjointExt} and Proposition \ref{prop:adjoint}, and the fact that the functor ${^*V} \otimes_B - \otimes_B  U^*$ is also exact. Namely, by \cite[Theorem V.4.1]{MacLane1}, the bimodules ${^*V}$ and $U^*$ are projective as $B$-modules.
\end{proof}

We can now prove the main result in this section.

\begin{theorem}\label{thm:FGBsepequiv}
Let $k$ be a commutative Noetherian ring, and $A$ and $B$ two Noetherian $k$-algebras which are projective as $k$-modules. Furthermore, suppose that the algebra $B$ separably divides the algebra $A$, and that $A$ satisfies \emph{\textbf{Fgb}}. Then $B$ also satisfies \emph{\textbf{Fgb}}. In particular, if $A$ and $B$ are separably equivalent, then $A$ satisfies \emph{\textbf{Fgb}} if and only if $B$ does.
\end{theorem}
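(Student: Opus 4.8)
The strategy is to exploit the adjunction from Corollary~\ref{cor:ext-iso} together with the separable division hypothesis to show that $\Ext_{B^e}^*(B,Y)$ is a finitely generated right $\Hoch^*(B)$-module for every $B$-bimodule $Y$. First I would fix bimodules ${}_AU_B$ and ${}_BV_A$, projective on both sides, such that $B$ is a direct summand of $V\otimes_A U$ as a $B$-bimodule. Specializing Corollary~\ref{cor:ext-iso} with $X = A$ (so that $V\otimes_A A\otimes_A U \cong V\otimes_A U$), I obtain for every $B$-bimodule $Y$ an isomorphism
\begin{center}
\begin{tikzpicture}
\diagram{d}{3em}{3em}{
\Ext_{A^e}^*\left(A,\, {}^{*}V\otimes_B Y\otimes_B U^{*}\right) & \Ext_{B^e}^*\left(V\otimes_A U,\, Y\right) \\
 };
\path[->, font = \scriptsize, auto]
(d-1-1) edge node{$\tau$} (d-1-2);
\end{tikzpicture}
\end{center}
of graded abelian groups, and moreover an isomorphism of right $\Hoch^*(A)$-modules, where the right-hand side carries its $\Hoch^*(A)$-action via the ring homomorphism $\varphi_A^{V,U}\colon \Hoch^*(A)\to\Ext_{B^e}^*(V\otimes_A U, V\otimes_A U)$.

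Next I would transport finiteness across this isomorphism. Since $A$ satisfies \textbf{Fgb}, the left-hand side $\Ext_{A^e}^*(A,\, {}^{*}V\otimes_B Y\otimes_B U^{*})$ is a finitely generated right $\Hoch^*(A)$-module — here one uses that ${}^{*}V\otimes_B Y\otimes_B U^{*}$ is an $A$-bimodule, which it is because ${}^{*}V$ is an $A$-$B$-bimodule and $U^{*}$ is a $B$-$A$-bimodule. Hence $\Ext_{B^e}^*(V\otimes_A U, Y)$ is finitely generated as a right module over $\Hoch^*(A)$ acting through $\varphi_A^{V,U}$, and in particular it is finitely generated as a right $\Ext_{B^e}^*(V\otimes_A U, V\otimes_A U)$-module. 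Now the point is to descend from $V\otimes_A U$ to the summand $B$: writing $V\otimes_A U \cong B\oplus C$ as $B$-bimodules, the graded ring $\Ext_{B^e}^*(V\otimes_A U, V\otimes_A U)$ is Morita-like over $\Hoch^*(B) = \Ext_{B^e}^*(B,B)$ via the idempotent projecting onto $B$, and $\Ext_{B^e}^*(B,Y)$ is a direct summand of $\Ext_{B^e}^*(V\otimes_A U, Y)$ both as a group and compatibly with the $\Hoch^*(B)$-actions (the inclusion and projection $B\hookrightarrow V\otimes_A U\twoheadrightarrow B$ induce the splitting, and they are compatible with Yoneda products since they are bimodule maps). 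From this I would first conclude that $\Hoch^*(B)$ itself, being $\Ext_{B^e}^*(B,B)$, is a summand of the finitely generated $\Hoch^*(A)$-module $\Ext_{B^e}^*(V\otimes_A U, B)$; since $k$ is Noetherian and $\Hoch^*(A)$ is Noetherian, $\Ext_{B^e}^*(V\otimes_A U, B)$ is a Noetherian $\Hoch^*(A)$-module, so any quotient or submodule is finitely generated, hence $\Hoch^*(B)$ is a finitely generated $\Hoch^*(A)$-algebra — more precisely, one shows the $\Hoch^*(A)$-module structure on $\Hoch^*(B)$ (through $\varphi_B^{V,U}$ or directly) makes it module-finite, so $\Hoch^*(B)$ is Noetherian as a ring, being module-finite over the Noetherian ring $\Hoch^*(A)$ (after checking the image lands in the centre appropriately, or invoking the Eakin–Nagata / Artin–Tate style argument for graded-commutative rings). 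Then, with $\Hoch^*(B)$ known to be Noetherian, the fact that $\Ext_{B^e}^*(B,Y)$ is a summand of $\Ext_{B^e}^*(V\otimes_A U, Y)$, which is module-finite over $\Hoch^*(A)$ and hence over $\Hoch^*(B)$ once one knows $\Hoch^*(A)$ acts through $\Hoch^*(B)$ up to the Morita correspondence, yields that $\Ext_{B^e}^*(B,Y)$ is a finitely generated right $\Hoch^*(B)$-module. The final sentence of the theorem is then immediate: if $A$ and $B$ are separably equivalent then $B$ separably divides $A$ and $A$ separably divides $B$, so applying the implication in both directions gives the equivalence.

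The main obstacle I anticipate is the bookkeeping in the descent step: one must verify carefully that the $\Hoch^*(A)$-action on $\Ext_{B^e}^*(V\otimes_A U, Y)$ coming from $\varphi_A^{V,U}$ is compatible with the $\Hoch^*(B)$-action (via $\varphi_B$ and Yoneda product) under the summand decomposition, so that ``finitely generated over $\Hoch^*(A)$'' can be upgraded to ``finitely generated over $\Hoch^*(B)$''. Concretely, one needs that the ring map $\Hoch^*(A)\to\Ext_{B^e}^*(V\otimes_A U, V\otimes_A U)$ followed by ``cut down by the idempotent $e$ for the summand $B$'' factors (up to the sign issues recorded after the definition of \textbf{Fgb}, and up to the graded-commutativity that lets us pass between left and right actions) through $\Hoch^*(B)$, so that the $\Hoch^*(B)$-module $\Ext_{B^e}^*(B,Y)$ is module-finite over $\Hoch^*(A)$ via this factoring and hence, $\Hoch^*(B)$ being Noetherian and the image of $\Hoch^*(A)$ being a Noetherian subring over which $\Hoch^*(B)$ is module-finite, finitely generated over $\Hoch^*(B)$. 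This is exactly the place where the proof in \cite{Linckelmann} uses symmetry to make the bimodule duals behave well, and the content of the present generalization is that the adjunction of Proposition~\ref{prop:adjoint} and Corollary~\ref{cor:ext-iso} already supplies what is needed without that hypothesis; so I would take care to phrase the descent purely in terms of the summand $B \mid V\otimes_A U$ and the induced splitting of $\Ext$-groups, invoking Remark~\ref{rem:conditionssatisfied}(2)--(3) to move between Noetherian-module and finite-generation formulations and between left and right actions.
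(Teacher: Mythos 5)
Your opening moves coincide with the paper's: set $X=A$ in Corollary \ref{cor:ext-iso}, use \textbf{Fgb} for $A$ to see that $\Ext_{A^e}^*\left(A,{}^*V\ot_B Y\ot_B U^*\right)$, and hence $\Ext_{B^e}^*\left(M,Y\right)$ with $M=V\ot_A U$, is finitely generated (indeed Noetherian) over $\Hoch^*(A)$, and then observe that this makes $\Ext_{B^e}^*\left(M,Y\right)$ a Noetherian right module over $\Ext_{B^e}^*\left(M,M\right)$, since the $\Hoch^*(A)$-action factors through $\varphi_A^{V,U}$. Up to this point your argument is the paper's.

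The gap is in your descent from $M$ to its summand $B$. You assert that the splitting $\Ext_{B^e}^*(M,Y)\simeq\Ext_{B^e}^*(B,Y)\oplus\Ext_{B^e}^*(C,Y)$ is compatible with the $\Hoch^*(A)$-action, so that $\Hoch^*(B)=\Ext_{B^e}^*(B,B)$ becomes a summand of a finitely generated $\Hoch^*(A)$-module and is therefore module-finite over $\Hoch^*(A)$, and that the $\Hoch^*(A)$-action on $\Ext_{B^e}^*(B,Y)$ ``factors through $\Hoch^*(B)$''. Neither claim is justified, and in general both fail: the right $\Hoch^*(A)$-action on $\Ext_{B^e}^*(M,Y)$ is precomposition with elements $\varphi_A^{V,U}(\eta)\in\Ext_{B^e}^*(M,M)$, and these need not commute with the idempotent $e=\iota\pi\in\Ext_{B^e}^0(M,M)$ cutting out the summand $B$; so the summand $\Ext_{B^e}^*(M,Y)e\simeq\Ext_{B^e}^*(B,Y)$ is not an $\Hoch^*(A)$-submodule, cutting by $e$ is not a ring homomorphism, and there is no natural ring map $\Hoch^*(A)\to\Hoch^*(B)$ here. (Producing such maps is exactly where Linckelmann's symmetry hypothesis and transfer maps entered; the present generalization avoids needing them.) There is also a second, related slip: even granting finite generation of $\Ext_{B^e}^*(M,Y)$ over $S=\Ext_{B^e}^*(M,M)$, finite generation does not pass to the corner, i.e.\ $Ne$ need not be finitely generated over $eSe$; what does pass is the Noetherian property. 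This is why the paper first reformulates \textbf{Fgb} as ``$\Ext_{B^e}^*(B,Y)$ is a Noetherian right $\Hoch^*(B)$-module for every $Y$'' (Remark \ref{rem:conditionssatisfied}(2)) and then applies the idempotent-corner argument (Linckelmann's Lemma 4.3): since $\Ext_{B^e}^*(M,Y)$ is Noetherian over $S$, the corner $\Ext_{B^e}^*(M,Y)e\simeq\Ext_{B^e}^*(B,Y)$ is Noetherian over $eSe\simeq\Hoch^*(B)$. Running this for all $B$-bimodules $Y$ gives \textbf{Fgb} for $B$ in one stroke (the case $Y=B$ yields Noetherianity of $\Hoch^*(B)$), with no module-finiteness of $\Hoch^*(B)$ over $\Hoch^*(A)$ and no factoring of actions required. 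Replacing your descent paragraph by this corner argument repairs the proof; the final ``separably equivalent'' statement then follows as you say.
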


\begin{proof}
\sloppy If $B$ separably divides $A$, then by definition there exist bimodules ${_AU_B}$ and ${_BV_A}$ which are projective on both sides, and such that $B$ is a direct summand of $V \ot_A U$ as a $B$-bimodule. Take any $B$-bimodule $Y$, and set $X = A$ in Corollary \ref{cor:ext-iso}. We then obtain a ring homomorphism
\begin{center}
\begin{tikzpicture}
\diagram{d}{3em}{3em}{
\Hoch^*(A) & \Ext_{B^e}^* \left ( V \otimes_A U, V \otimes_A U \right ) \\
 };
\path[->, font = \scriptsize, auto]
(d-1-1) edge node{$\varphi_A^{V,U}$} (d-1-2);
\end{tikzpicture}
\end{center}
of graded $k$-algebras, and an isomorphism
\begin{center}
\begin{tikzpicture}
\diagram{d}{3em}{3em}{
\Ext_{A^e}^* \left ( A, {^*V} \otimes_B Y \otimes_B  U^* \right ) & \Ext_{B^e}^* \left ( V \otimes_A U, Y \right ) \\
 };
\path[->, font = \scriptsize, auto]
(d-1-1) edge node{$\tau$} (d-1-2);
\end{tikzpicture}
\end{center}
of graded right $\Hoch^*(A)$-modules, where the $\Hoch^*(A)$-module structure on $\Ext_{B^e}^* \left ( V \otimes_A U, Y \right )$ is via $\varphi_A^{V,U}$.

By assumption and Remark \ref{rem:conditionssatisfied}(2), given any $A$-bimodule $X$, the right $\Hoch^*(A)$-module $\Ext_{A^e}^* \left ( A, X \right )$ is Noetherian. In particular, this holds for the $\Hoch^*(A)$-module $\Ext_{A^e}^* \left ( A, {^*V} \otimes_B Y \otimes_B  U^* \right )$. But then the same must hold for $\Ext_{B^e}^* \left ( V \otimes_A U, Y \right )$, since the isomorphism $\tau$ is one of right $\Hoch^*(A)$-modules. However, since $\Ext_{B^e}^* \left ( V \otimes_A U, Y \right )$ is an $\Hoch^*(A)$-module via the ring homomorphism $\varphi_A^{V,U}$, we conclude that $\Ext_{B^e}^* \left ( V \otimes_A U, Y \right )$ is a Noetherian right module over $\Ext_{B^e}^* \left ( V \otimes_A U, V \otimes_A U \right )$, for every $B$-bimodule $Y$. Then since $B$ is a direct summand of $V \otimes_A U$ as a $B$-bimodule, the right $\Hoch^*(B)$-module $\Ext_{B^e}^* \left ( B, Y \right )$ is Noetherian for every $B$-bimodule $Y$ (cf.\ \cite[Lemma 4.3]{Linckelmann}). Consequently, by Remark \ref{rem:conditionssatisfied}(2) again, the algebra $B$ satisfies \textbf{Fgb}.
\end{proof}

Using Lemma \ref{lem:finitenessassumptions} and Remark \ref{rem:conditionssatisfied}, we immediately obtain the following.

\begin{corollary}\label{cor:FGsepequiv}
Let $k$ be a field, and $A$ and $B$ two finite dimensional $k$-algebras with $A / \mathfrak{r}_A \otimes_k A / \mathfrak{r}_A$ and 
$B / \mathfrak{r}_B \otimes_k B / \mathfrak{r}_B$ semisimple, where $\mathfrak{r}_A$ is the radical of $A$, and $\mathfrak{r}_B$ is the radical of $B$ (as happens for example when $k$ is algebraically closed or perfect). Furthermore, suppose that $B$ separably divides $A$, and that $A$ satisfies \emph{\textbf{Fg}}. Then $B$ also satisfies \emph{\textbf{Fg}}. In particular, if $A$ and $B$ are separably equivalent, then $A$ satisfies \emph{\textbf{Fg}} if and only if $B$ does.
\end{corollary}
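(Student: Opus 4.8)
The plan is to chain together the three ingredients already in place: Lemma~\ref{lem:finitenessassumptions}(2), Theorem~\ref{thm:FGBsepequiv}, and the elementary observation (from the remark following the definition of separable equivalence) that separable equivalence is exactly the conjunction of separable divisibility in both directions. First I would check that the standing hypotheses of Section~\ref{sec:separably} are met: a finite dimensional algebra over a field $k$ is Noetherian and free, hence projective, as a $k$-module, and $k$ itself, being a field, is in particular semisimple. So both $A$ and $B$ fall within the scope of the earlier results.

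Next, since $A / \mathfrak{r}_A \otimes_k A / \mathfrak{r}_A$ is semisimple, Lemma~\ref{lem:finitenessassumptions}(2) applies to $A$ and shows that $A$ satisfies \textbf{Fgb} if and only if it satisfies \textbf{Fg}; as $A$ satisfies \textbf{Fg} by hypothesis, it satisfies \textbf{Fgb}. Because $B$ separably divides $A$, Theorem~\ref{thm:FGBsepequiv} then yields that $B$ satisfies \textbf{Fgb}. Finally, since $B / \mathfrak{r}_B \otimes_k B / \mathfrak{r}_B$ is semisimple, Lemma~\ref{lem:finitenessassumptions}(2) applies to $B$ as well and converts \textbf{Fgb} for $B$ back into \textbf{Fg} for $B$, which is the first assertion. (Remark~\ref{rem:conditionssatisfied}(1) records that the semisimplicity hypothesis on $A / \mathfrak{r} \otimes_k A / \mathfrak{r}$ is automatic when $k$ is algebraically closed or perfect, which accounts for the parenthetical remark in the statement.)

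For the ``in particular'' clause, recall from the remark after the definition that $A$ and $B$ are separably equivalent precisely when $B$ separably divides $A$ and $A$ separably divides $B$. Applying the first assertion once in each direction then shows that $A$ satisfies \textbf{Fg} if and only if $B$ does.

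I do not anticipate any real obstacle: all of the substance is already contained in Theorem~\ref{thm:FGBsepequiv} and Lemma~\ref{lem:finitenessassumptions}, and this corollary is a purely formal deduction. The only point requiring the slightest attention is to confirm that the hypotheses ``$k$ a field'' and ``$A, B$ Noetherian and projective over $k$'' demanded by those earlier results hold automatically in the finite dimensional setting over a field, and that the radical of a finite dimensional algebra is a genuinely nilpotent ideal so that the terminology matches that of the earlier lemmas.
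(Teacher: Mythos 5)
Your proposal is correct and follows exactly the route the paper intends: the paper states the corollary as an immediate consequence of Theorem \ref{thm:FGBsepequiv} together with Lemma \ref{lem:finitenessassumptions} and Remark \ref{rem:conditionssatisfied}, which is precisely the chain (\textbf{Fg} for $A$ $\Rightarrow$ \textbf{Fgb} for $A$ $\Rightarrow$ \textbf{Fgb} for $B$ $\Rightarrow$ \textbf{Fg} for $B$) that you spell out. Your verification of the standing hypotheses and the two-directional argument for the ``in particular'' clause match the paper's implicit reasoning.
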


\section{Skew group algebras and finite tensor categories}\label{sec:applications1}

An important class of algebras for which Theorem \ref{thm:FGBsepequiv} applies is formed by the skew group algebras. These algebras are particularly ubiquitous in the theory of Hopf algebras, and they are intimately linked to finite symmetric tensor categories in characteristic zero, via Deligne's famous theorem.

As in the previous section, let $k$ be a commutative Noetherian ring, and $A$ a Noetherian $k$-algebra which is projective as a $k$-module. All modules are assumed to be finitely generated left modules, unless otherwise specified. Furthermore, suppose that $G$ is a finite group acting on $A$, in terms of a homomorphism from $G$ to the multiplicative group of $k$-algebra automorphisms of $A$. The associated \emph{skew group algebra} $A \rtimes G$ is the $k$-algebra whose underlying $k$-module is the tensor product $A \otimes_k kG$, and with multiplication defined by
$$( a \otimes g )( b \otimes h ) = a({^gb}) \otimes gh$$
for $a,b \in A$ and $g,h \in G$. This algebra is also called the \emph{smash product} of $A$ and $kG$, and then often denoted by $A \# kG$. 

As a left $A$-module, the skew group algebra decomposes into a direct sum 
$$A \rtimes G = \bigoplus_{g \in G} A \otimes g$$
This is in particular a decomposition of $k$-modules, and then with each summand isomorphic to $A$. Therefore, since the group $G$ is finite, the skew group algebra is again a Noetherian $k$-algebra, and it is projective as a $k$-module since $A$ is.

\begin{theorem}\label{thm:skewgroupalgebras}
Let $k$ be a commutative Noetherian ring, and $A$ a Noetherian $k$-algebra which is projective as a $k$-module. Furthermore, let $G$ be a finite group acting on $A$. Then the following hold.

\emph{(1)} If the skew group algebra $A \rtimes G$ satisfies \emph{\textbf{Fgb}}, then so does the algebra $A$.

\emph{(2)} Suppose that the order of $G$ is an invertible element of $k$. Then $A$ satisfies \emph{\textbf{Fgb}} if and only if $A \rtimes G$ does.
\end{theorem}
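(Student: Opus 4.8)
The plan is to exhibit the algebras $A$ and $A\rtimes G$ as separably dividing one another (or one dividing the other, in part (1)), and then to quote Theorem \ref{thm:FGBsepequiv}. The natural pair of bimodules to use is the algebra $\Lambda := A\rtimes G$ itself, viewed as an $A$-$\Lambda$-bimodule ${_AU_\Lambda} := {_A\Lambda_\Lambda}$ and as a $\Lambda$-$A$-bimodule ${_\Lambda V_A} := {_\Lambda\Lambda_A}$; that is, $U$ and $V$ are both $\Lambda$ with the obvious regular actions on one side and with the $A$-action on the other side coming from the inclusion $A\hookrightarrow\Lambda$, $a\mapsto a\otimes 1$. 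One first checks these bimodules satisfy the projectivity hypotheses of the definition: as a left or right $A$-module, $\Lambda\cong A^{|G|}$ is free, hence projective; as a left or right $\Lambda$-module it is free of rank one. So $U$ and $V$ are projective on both sides, as required. The two tensor products to analyze are $V\otimes_\Lambda U = \Lambda\otimes_\Lambda\Lambda\cong\Lambda$ as a $\Lambda$-bimodule (trivially an isomorphism, hence $\Lambda$ is a summand — this gives that $A$ separably divides $\Lambda$), and $U\otimes_\Lambda V = \Lambda\otimes_\Lambda\Lambda$... wait, the relevant product for the other direction is over $A$, namely $V\otimes_A U = \Lambda\otimes_A\Lambda$ as a $\Lambda$-bimodule, and $U\otimes_\Lambda V$ is not what is wanted. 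Let me re-read the definition: $B$ separably divides $A$ if there are ${_AU_B},{_BV_A}$ with $B\mid V\otimes_A U$ as $B$-bimodules.

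So for part (1) I want to show $A$ separably divides $\Lambda = A\rtimes G$: I need bimodules ${_\Lambda U_A}$ and ${_A V_\Lambda}$, projective on both sides, with $A$ a summand of $V\otimes_\Lambda U$ as an $A$-bimodule. Take $U = {_\Lambda\Lambda_A}$ and $V = {_A\Lambda_\Lambda}$; then $V\otimes_\Lambda U = \Lambda\otimes_\Lambda\Lambda\cong\Lambda$ as an $A$-bimodule, and since $\Lambda\cong\bigoplus_{g\in G}A\otimes g$ with the $g=1$ summand isomorphic to $A$ as an $A$-bimodule (the action on $A\otimes 1$ is $a\cdot(b\otimes 1)\cdot a' = ab a'\otimes 1$), we get $A$ as an $A$-bimodule summand of $V\otimes_\Lambda U$. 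Hence $A$ separably divides $\Lambda$, and Theorem \ref{thm:FGBsepequiv} (applied with the roles "$B = A$, $A = \Lambda$") yields: if $\Lambda$ satisfies \textbf{Fgb}, so does $A$. That is part (1).

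For part (2), when $|G|$ is invertible in $k$, I additionally want $\Lambda$ to separably divide $A$, i.e. bimodules ${_AU'_\Lambda}$ and ${_\Lambda V'_A}$, projective on both sides, with $\Lambda$ a summand of $V'\otimes_A U'$ as a $\Lambda$-bimodule. Take $U' = {_A\Lambda_\Lambda}$ and $V' = {_\Lambda\Lambda_A}$; then $V'\otimes_A U' = \Lambda\otimes_A\Lambda$ as a $\Lambda$-bimodule. The key point is that the multiplication map $\Lambda\otimes_A\Lambda\to\Lambda$ is a split surjection of $\Lambda$-bimodules when $|G|$ is invertible: one writes down the normalized averaging section $\lambda\mapsto\frac{1}{|G|}\sum_{g\in G}(\lambda\cdot(1\otimes g^{-1}))\otimes(1\otimes g)$ and checks it is a $\Lambda$-bimodule map splitting multiplication — this is the standard separability id" $\Lambda/A$ is a separable extension" when $|G|\in k^\times$. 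Granting this, $\Lambda\mid\Lambda\otimes_A\Lambda$ as $\Lambda$-bimodules, so $\Lambda$ separably divides $A$; combined with part (1) and the Remark in the paper, $A$ and $\Lambda$ are separably equivalent, and Theorem \ref{thm:FGBsepequiv} gives the equivalence of \textbf{Fgb} for $A$ and $A\rtimes G$.

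The main obstacle I expect is verifying that the averaging map is genuinely a homomorphism of $\Lambda$-bimodules — i.e. that it is both left and right $\Lambda$-linear and a section of multiplication; the left $A$-linearity and the section property are immediate, but right $\Lambda$-linearity requires a small computation exploiting that summing over $g\in G$ is invariant under right translation in $G$, and one must be careful that the tensor is over $A$, not over $k$, so that moving elements of $A$ across the tensor sign is legitimate. Everything else — the projectivity of the bimodules, the identification of the relevant tensor products, and the extraction of $A$ as a bimodule summand of $\Lambda$ — is routine bookkeeping with the decomposition $\Lambda = \bigoplus_{g\in G}A\otimes g$.
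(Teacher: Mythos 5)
Your proposal is correct and follows essentially the same route as the paper: take $U$ and $V$ to be the skew group algebra $\Lambda = A \rtimes G$ itself with the regular action on one side and the restricted $A$-action on the other, note $A \mid \Lambda \cong V \otimes_\Lambda U$ as $A$-bimodules to get separable division (hence part (1)), and use the splitting of the multiplication map $\Lambda \otimes_A \Lambda \to \Lambda$ when $|G|$ is invertible to get separable equivalence, then invoke Theorem \ref{thm:FGBsepequiv}. The only difference is that where the paper cites Reiten--Riedtmann for the splitting, you verify it directly with the averaging section $\lambda \mapsto \tfrac{1}{|G|}\sum_{g} \lambda(1\otimes g^{-1}) \otimes (1\otimes g)$, which is exactly the content of that citation and indeed makes transparent that no field or Artin hypothesis is needed.
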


The second part of the theorem has recently been proved independently in \cite{Sandoy}, for $k$ an algebraically closed field.

\begin{proof}
The skew group algebra becomes an $A$-bimodule by defining 
$$a_1 \cdot (a \otimes g) \cdot a_2 = a_1 a ({^ga_2}) \otimes g$$
This is the same as the bimodule structure we obtain by viewing $A$ as a subalgebra of $A \rtimes G$ via the injective $k$-algebra homomorphism $A \to A \rtimes G$ given by $a \mapsto a \otimes e$, where $e$ is the identity element of $G$. The decomposition
$$A \rtimes G = \bigoplus_{g \in G} A \otimes g$$
is then one of $A$-bimodules, and $A$ is isomorphic to the summand $A \otimes e$. Note also that each summand, and hence also $A \rtimes G$ itself, is projective both as a left and as a right $A$-module. Namely, as a left $A$-module, the summand $A \otimes g$ is isomorphic to $A$, and as a right $A$-module it is isomorphic to the twisted regular module $A_g$.

Denote the skew group algebra by $B$, and consider the bimodules ${_AU_B} = B$ and ${_BV_A} = B$. By the above, they are projective as one-sided modules, and $A$ is a direct summand of $U \otimes_B V$ as $A$-bimodules. Thus $A$ separably divides $B$. If the order of $G$ is an invertible element of $k$, then it follows from \cite[Theorem 1.1, part (A)]{ReitenRiedtmann} that $B$ is a direct summand of $V \otimes_A U$ as $B$-bimodules (the proof does not require $k$ to be a field, or $A$ to be an Artin algebra), hence $A$ and $B$ are separably equivalent in this case. The result now follows from Theorem \ref{thm:FGBsepequiv}.
\end{proof}

As mentioned at the beginning of this section, a theorem of Deligne provides a link between certain skew group algebras and finite symmetric tensor categories over fields of characteristic zero. We shall now use this, together with Theorem \ref{thm:skewgroupalgebras}, to establish finite generation of cohomology for such categories. Moreover, we shall also see that the so-called representation dimension of such a category is one more than the Krull dimension of its cohomology ring.

Let us now upgrade $k$ to a field (not necessarily algebraically closed), and suppose that $( \C, \ot, \unit )$ is a finite tensor category over $k$, in the sense of \cite{EGNO}. Thus $\C$ is a locally finite $k$-linear abelian category, with finitely many isomorphism classes of simple objects, all of which admit projective covers. Furthermore, there is a bifunctor $\ot \colon \C \times \C \to \C$, the tensor product, which is associative up to functorial isomorphisms, and compatible with the abelian structure of $\C$. Specifically, the tensor product is bilinear on morphisms, and satisfies the so-called pentagon axiom. Moreover, there is a two-sided unit object $\unit \in \C$ with respect to the tensor product, and this object is simple. Finally, every object of $\C$ admits both a left and a right dual object, that is, the category is rigid. By \cite[Proposition 4.2.1 and Remark 6.1.4]{EGNO}, the latter implies that the tensor product is biexact, and that $\C$ is a quasi-Frobenius category, that is, the projective objects and the injective objects coincide.

The cohomology ring of $( \C, \ot, \unit )$ is the graded $k$-algebra $\Coh^* ( \C ) = \Ext_{\C}^*( \unit, \unit )$, with the Yoneda product as multiplication. By \cite[Theorem 1.7]{Suarez-Alvarez}, this is a graded-commutative ring. If $M$ is an object of $\C$, then the tensor product $- \ot M$ induces a homomorphism
\begin{center}
\begin{tikzpicture}
\diagram{d}{3em}{3em}{
\Coh^* ( \C ) & \Ext_{\C}^* \left ( M,M \right ) \\
 };
\path[->, font = \scriptsize, auto]
(d-1-1) edge node{$\psi_M$} (d-1-2);
\end{tikzpicture}
\end{center}
of graded $k$-algebras. Thus if $M$ and $N$ are objects of $\C$, then $\Ext_{\C}^* (M,N)$ becomes a right $\Coh^* ( \C )$-module via $\psi_M$ (and the Yoneda product), and a left $\Coh^* ( \C )$-module via $\psi_N$. However, by adapting the proof of \cite[Theorem 1.1]{SnashallSolberg}, one can show that the cohomology ring acts graded-commutatively. More precisely, if $\eta \in \Coh^* ( \C )$ and $\theta \in \Ext_{\C}^* (M,N)$ are homogeneous elements, then
$$\psi_N ( \eta ) \circ \theta = (-1)^{ | \eta | | \theta | } \theta \circ \psi_M ( \eta )$$
Consequently, the cohomology ring acts on the cohomology of $\C$ basically in one way. The following was conjectured by Etingof and Ostrik in \cite{EtingofOstrik}.

\begin{conjecture}
The cohomology ring $\Coh^* ( \C )$ is Noetherian, and $\Ext_{\C}^* (M,M)$ is a finitely generated $\Coh^* ( \C )$-module for all objects $M \in \C$.
\end{conjecture}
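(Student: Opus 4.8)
The plan is to prove the conjecture for finite \emph{symmetric} tensor categories over a field $k$ of characteristic zero, by transporting it to a finite generation statement for a very explicit algebra and then feeding that into Theorem~\ref{thm:skewgroupalgebras}. First I would reduce to the case $k = \bar{k}$: the formation of $\Coh^*(\C)$ and of the modules $\Ext_{\C}^*(M,N)$ commutes with the faithfully flat extension of scalars $k \to \bar{k}$, and both Noetherianity and finite generation of modules descend along faithfully flat ring maps, so it suffices to prove the statement after replacing $\C$ by the finite symmetric tensor category over $\bar{k}$ obtained from it by base change. Thus assume $k$ is algebraically closed of characteristic zero.

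Next, apply Deligne's theorem \cite{Deligne} (together with the structure theory of finite-dimensional triangular Hopf algebras): a finite symmetric tensor category over such a field is equivalent, as a tensor category, to $\Rep H$, where $H = \Lambda \rtimes kG$ is the skew group algebra of an exterior algebra $\Lambda = \Lambda(W)$ on a finite-dimensional vector space $W$ by a finite group $G$ acting linearly on $W$ (hence by algebra automorphisms on $\Lambda$), and where $|G|$ is invertible in $k$. Since cohomology depends only on the underlying abelian category together with its tensor product, the super-symmetry of $\Rep H$ plays no role, and under this equivalence $\Coh^*(\C) \cong \Ext_H^*(\unit,\unit)$ and $\Ext_{\C}^*(M,M) \cong \Ext_H^*(M,M)$, compatibly with the relevant module structures. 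Now the exterior algebra $\Lambda$ is a finite-dimensional graded complete intersection, and it is well documented (in the literature on cohomology over complete intersections) that such an algebra satisfies \textbf{Fg}; since $k = \bar{k}$, Lemma~\ref{lem:finitenessassumptions} and Remark~\ref{rem:conditionssatisfied} make \textbf{Fg} and \textbf{Fgb} equivalent for both $\Lambda$ and $\Lambda \rtimes kG$. Hence $\Lambda$ satisfies \textbf{Fgb}, and as $|G|$ is invertible in $k$, Theorem~\ref{thm:skewgroupalgebras}(2) shows that $H = \Lambda \rtimes kG$ satisfies \textbf{Fgb}, and therefore \textbf{Fg}.

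It remains to pass from \textbf{Fg} for the Hopf algebra $H$ to the Etingof--Ostrik statement for $\C \simeq \Rep H$; concretely, to compare finiteness over $\HH^*(H)$ with finiteness over $\Coh^*(\C) = \Ext_H^*(\unit,\unit)$. For this one uses the Hopf structure of $H$: identifying $\HH^*(H) \cong \Ext_H^*(\unit, H^{\mathrm{ad}})$ and noting that $\unit$ is a direct summand of the adjoint module $H^{\mathrm{ad}}$ (split off by the counit, with the splitting provided by the unit), one obtains a surjective homomorphism of graded rings $\HH^*(H) \twoheadrightarrow \Ext_H^*(\unit,\unit)$, so that $\Ext_H^*(\unit,\unit)$ is Noetherian as a quotient of $\HH^*(H)$. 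One then shows, via the standard dictionary between the Hochschild cohomology ring and the cohomology ring of the unit object for a Hopf algebra — the same dictionary that makes the two resulting notions of support variety coincide — that every module $\Ext_H^*(M,M)$, being finitely generated over $\HH^*(H)$, is finitely generated over $\Ext_H^*(\unit,\unit)$ as well (reducing first, by $\Ext_H^*(M,N) \cong \Ext_H^*(\unit, {}^*M \otimes N)$, to cohomology of $\unit$). This last comparison, which requires careful bookkeeping of the two module structures, is the only ingredient not already supplied by the separable-equivalence machinery of the previous sections, and I expect it to be the main obstacle; everything else is either classical input (finite generation of cohomology over exterior algebras, Deligne's theorem) or a direct application of Theorem~\ref{thm:skewgroupalgebras}.
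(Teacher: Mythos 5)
Your proposal establishes exactly the case the paper proves (finite symmetric tensor categories in characteristic zero, Theorem \ref{thm:symmetricfgc}) and follows essentially the same route: base change to an algebraically closed field (the paper delegates this to Negron--Plavnik), Deligne's theorem plus Andruskiewitsch--Etingof--Gelaki to reduce to the skew group algebra $\wedge V \rtimes G$, \textbf{Fg} for the exterior algebra transferred via Theorem \ref{thm:skewgroupalgebras} and Lemma \ref{lem:finitenessassumptions}, and finally the comparison between \textbf{Fg} and \textbf{Fgt} for a finite dimensional Hopf algebra. The last step, which you flag as the main obstacle, is precisely the paper's Lemma \ref{lem:fgversusfgc} (handled via the adjoint-module identification and \cite[Theorem 9.3.9]{Witherspoon}), so your sketch of it is sound and matches the paper's argument.
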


Note that if the conjecture holds, then for all objects $M,N \in \C$, the $\Coh^* ( \C )$-module $\Ext_{\C}^* (M,N)$ is finitely generated, and not just $\Ext_{\C}^* (M,M)$ and $\Ext_{\C}^* (N,N)$. This follows from the fact that $\Ext_{\C}^* (M,N)$ is a direct summand of $\Ext_{\C}^* (M \oplus N,M \oplus N)$.

The conjecture is open in general, but it is known to be true for several important classes of tensor categories. For example, by the classical results of Evens and Venkov (cf.\ \cite{Evens, Venkov}), it holds for the category of finitely generated modules over a group algebra of a finite group. More generally, it was shown in \cite{FriedlanderSuslin} that is holds over finite dimensional cocommutative Hopf algebras. When it holds for a finite tensor category, then as shown in \cite{BerghPlavnikWitherspoon}, there is a rich theory of support varieties, just as in the classical case of group algebras and cocommutative Hopf algebras.

\begin{definition}
The finite tensor category $( \C, \ot, \unit )$ satisfies \textbf{Fgt} if the following hold: the cohomology ring $\Coh^* ( \C )$ is Noetherian, and $\Ext_{\C}^* (M,M)$ is a finitely generated $\Coh^* ( \C )$-module for all objects $M \in \C$.
\end{definition}

We have used the letter <<t>> to indicate that we are looking at tensor categories, and in order to distinguish this assumption from the finiteness conditions \textbf{Fg} and \textbf{Fgb}, which we have reserved for algebras and their Hochschild cohomology rings. However, for a finite dimensional Hopf algebra, all the three conditions make sense. Namely, given such a $k$-algebra $A$, we may ask if \textbf{Fg} (or \textbf{Fgb}) holds, that is, is it true that its Hochschild cohomology ring $\Hoch^*(A)$ is Noetherian, and that $\Ext_A^*(M,M)$ is a finitely generated $\Hoch^*(A)$-module for every $A$-module $M$ (or $\Ext_{A^e}^*(A,X)$ for every $A$-bimodule $X$)? On the other hand, the Hopf algebra structure on $A$ turns the module category $\mod A$ into a finite tensor category, with $k$ as the unit object, and then with $\Ext_A^*(k,k)$ as the cohomology ring, usually denoted just by $\Coh^* ( A )$ (it does not matter whether we use the cup product or the Yoneda product as multiplication in this ring; by \cite[Theorem 9.3.4]{Witherspoon} they are equal). Thus we may also ask if \textbf{Fgt} holds for $\mod A$: is it true that $\Coh^* ( A )$ is Noetherian, and that $\Ext_A^*(M,M)$ is a finitely generated $\Coh^* ( A )$-module for every $A$-module $M$? The following lemma, which is a strengthening of \cite[Proposition 2.9 and Proposition 3.4(1)\&(2)]{NguyenWangWitherspoon}, shows that these two finiteness conditions are equivalent. For simplicity, we shall just say that <<$A$ satisfies \textbf{Fgt}>> if the finiteness condition \textbf{Fgt} holds for the finite tensor category $\mod A$.

\begin{lemma}\label{lem:fgversusfgc}
A finite dimensional Hopf algebra satisfies \emph{\textbf{Fgt}} if and only if it satisfies \emph{\textbf{Fg}}.
\end{lemma}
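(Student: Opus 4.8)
The plan is to show the two finiteness conditions \textbf{Fg} and \textbf{Fgt} for a finite dimensional Hopf algebra $A$ are equivalent by relating the Hochschild cohomology ring $\Hoch^*(A) = \Ext_{A^e}^*(A,A)$ to the cohomology ring $\Coh^*(A) = \Ext_A^*(k,k)$, and relating bimodule Ext-groups to ordinary Ext-groups. The key idea, going back to the classical case of group algebras, is that for a Hopf algebra the enveloping algebra $A^e = A \otimes_k A^{\op}$ is related to $A$ itself: via the antipode and the coproduct, an $A$-bimodule becomes a left $A$-module (the ``adjoint'' action $x \cdot m = \sum x_{(1)} m S(x_{(2)})$), and under this identification the $A^e$-module $A$ corresponds to the trivial module $k$. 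More precisely, there is an equivalence (or at least a suitable exact functor with good properties) between $\mod A^e$ and $\mod A$ sending $A$ to $k$ and sending the $A^e$-module $\Hom_k(M,N)$, for $A$-modules $M,N$, to an $A$-module whose cohomology computes $\Ext_A^*(M,N)$. This gives, for all $A$-modules $M,N$, a natural isomorphism $\Ext_{A^e}^*(A, \Hom_k(M,N)) \simeq \Ext_A^*(M,N)$, compatible with the respective actions of $\Hoch^*(A)$ and $\Coh^*(A)$, and with $\Hoch^*(A) \simeq \Coh^*(A)$ as graded rings (or at least each finitely generated over the other).

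First I would make precise the ring-level statement: $\Hoch^*(A) = \Ext_{A^e}^*(A,A)$ is finitely generated as a module over $\Coh^*(A) = \Ext_A^*(k,k)$ and conversely, so that one is Noetherian if and only if the other is. This is where I would invoke the Hopf-algebraic comparison: the multiplication map $A \otimes_k A \to A$ and the unit/counit structure give algebra maps in both directions between $\Hoch^*(A)$ and $\Coh^*(A)$ whose composite is (up to finite generation) the identity; in the finite dimensional case $A$ is a free $k$-module, and one can use that $A^e$ is itself a Hopf algebra with $A$ as a module algebra, or directly the fact that $A \otimes_k A^{\op} \cong A \otimes_k A$ as algebras via $\id \otimes S$, so $\mod A^e$ is equivalent to $\mod (A \otimes_k A)$, and the $A^e$-module $A$ goes to the $A$-bimodule that, pulled back along the coproduct $\Delta \colon A \to A \otimes_k A$, becomes the trivial module. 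Second, having the ring equivalence/finiteness in hand, I would deal with modules: given an $A$-module $M$, choose $N = M$ in the isomorphism $\Ext_{A^e}^*(A,\Hom_k(M,M)) \simeq \Ext_A^*(M,M)$ to transfer finite generation of the left-hand side (which is controlled by \textbf{Fgb}, hence — since over a field, by Remark~\ref{rem:conditionssatisfied}(1), and Lemma~\ref{lem:finitenessassumptions} when $A/\mathfrak{r}\otimes_k A/\mathfrak{r}$ is semisimple, which is automatic when we are over a field and can arrange via the Hopf structure — equivalent to \textbf{Fg}) to the right-hand side, and vice versa.

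For the direction \textbf{Fg} $\Rightarrow$ \textbf{Fgt}: assume $\Hoch^*(A)$ Noetherian and $\Ext_A^*(M,M)$ finitely generated over $\Hoch^*(A)$ for all $M$. Then $\Coh^*(A)$, being finitely generated as a $\Hoch^*(A)$-module (by the comparison above) and a subalgebra (or quotient) in a compatible way, is Noetherian; and $\Ext_A^*(M,M)$, finitely generated over $\Hoch^*(A)$, is finitely generated over $\Coh^*(A)$ once we know the $\Hoch^*(A)$-action factors through or is compatible with the $\Coh^*(A)$-action — which is exactly the content of the statement that $- \otimes_A M$ and $- \otimes_k M$ induce compatible maps $\varphi_M$ and $\psi_M$. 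For \textbf{Fgt} $\Rightarrow$ \textbf{Fg}: symmetrically, $\Coh^*(A)$ Noetherian and $\Ext_A^*(M,M)$ finitely generated over it gives $\Hoch^*(A)$ Noetherian (finite over a Noetherian ring) and each $\Ext_A^*(M,M)$ finitely generated over $\Hoch^*(A)$.

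The main obstacle I expect is establishing cleanly the comparison between $\Hoch^*(A)$ and $\Coh^*(A)$, and between $\Ext_{A^e}^*(A,\Hom_k(M,N))$ and $\Ext_A^*(M,N)$, at the level of graded rings and modules with all the actions compatible, rather than just as graded vector spaces. In the symmetric/cocommutative case this is classical (the ring map is an isomorphism), but for a general finite dimensional Hopf algebra one only gets that each is a finitely generated module over the other — one must verify that ``finitely generated module over'' suffices for transferring Noetherianity and the module-finiteness condition in both directions, and that the antipode being invertible (automatic for finite dimensional Hopf algebras) is what makes the equivalence $\mod A^e \simeq \mod(A \otimes_k A)$ work. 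A secondary, more bookkeeping-type obstacle is checking that the hypothesis of Lemma~\ref{lem:finitenessassumptions}(2) (that $A/\mathfrak{r}\otimes_k A/\mathfrak{r}$ is semisimple, equivalently that every simple $A$-bimodule is $\Hom_k(S,T)$ for simple $A$-modules $S,T$) either holds here or can be bypassed, so that \textbf{Fgb} and \textbf{Fg} are genuinely interchangeable for the algebra $A$; over a field this is the content of Remark~\ref{rem:conditionssatisfied}(1) for perfect $k$, and for general $k$ one argues via the path-algebra presentation or handles it directly through the Hopf structure.
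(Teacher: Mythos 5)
Your overall strategy -- comparing $\Hoch^*(A)$ with $\Coh^*(A)$ and transferring finite generation through an isomorphism $\Ext_{A^e}^*(A,\Hom_k(M,N)) \simeq \Ext_A^*(M,N)$ -- is the right circle of ideas, but the two claims you lean on are precisely the points that need proof, and one of them is false as stated. The ``ring-level comparison'' you propose to establish first (each of $\Hoch^*(A)$ and $\Coh^*(A)$ finitely generated as a module over the other) is not an unconditional fact about finite dimensional Hopf algebras: finite generation of $\Coh^*(A)$ over $\Hoch^*(A)$ is a consequence of \textbf{Fg} (take $M=k$), and finite generation of $\Hoch^*(A)\simeq\Ext_A^*(k,A)$ (adjoint action) over $\Coh^*(A)$ is a consequence of \textbf{Fgt}; using both as inputs to both directions is circular.

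More seriously, your key transfer step -- ``$\Ext_A^*(M,M)$, finitely generated over $\Hoch^*(A)$, is finitely generated over $\Coh^*(A)$ once we know the $\Hoch^*(A)$-action factors through or is compatible with the $\Coh^*(A)$-action'' -- rests on the factorization $\varphi_M = \psi_M \circ \varphi_k$, where $\varphi_k = -\ot_A k \colon \Hoch^*(A) \to \Coh^*(A)$, and this is false in general. Already in degree zero, for $z \in \Hoch^0(A) = Z(A)$ one has $\varphi_M(z) = $ multiplication by $z$ on $M$, whereas $\psi_M(\varphi_k(z)) = \varepsilon(z)\cdot\id_M$; for $A = kG$ and $z$ a nontrivial conjugacy class sum, acting on $M = kG$, these differ. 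The paper's proof avoids comparing $\varphi_M$ and $\psi_M$ altogether: for \textbf{Fg} $\Rightarrow$ \textbf{Fgt} it passes to $\Ext_A^*(k, M \ot_k D(M))$, whose \emph{source} object is $k$, so the right $\Hoch^*(A)$-action there genuinely is $\varphi_k$ followed by Yoneda product and finite generation over $\Hoch^*(A)$ does pass to $\Coh^*(A)$; it then invokes \cite[Theorem 9.3.9]{Witherspoon} to translate finite generation of $\Ext_A^*(k, M\ot_k D(M))$ over $\Coh^*(A)$ into finite generation of $\Ext_A^*(M,M)$ over $\Coh^*(A)$ via $-\ot_k M$, while the converse direction \textbf{Fgt} $\Rightarrow$ \textbf{Fg} is quoted from \cite[Proposition 2.9]{NguyenWangWitherspoon}. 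A correct write-up along your lines would have to either prove a precise action-compatibility statement (which is the actual hard content) or reroute through source-$k$ Ext groups as the paper does. A minor further point: the detour through \textbf{Fgb}, Lemma \ref{lem:finitenessassumptions} and semisimplicity of $A/\mathfrak{r}\ot_k A/\mathfrak{r}$ is not needed for this lemma, and that semisimplicity is not automatic over an arbitrary field of definition, only over perfect (e.g.\ algebraically closed) fields.
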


\begin{proof}
Let $A$ be a finite dimensional Hopf algebra over the field $k$. If $A$ satisfies \textbf{Fgt}, then by \cite[Proposition 2.9]{NguyenWangWitherspoon} it also satisfies \textbf{Fg}. Suppose therefore that it satisfies the latter finiteness condition: the Hochschild cohomology ring $\Hoch^*(A)$ is Noetherian, and $\Ext_A^*(M,M)$ is a finitely generated $\Hoch^*(A)$-module for every $A$-module $M$.

If $M$ is an $A$-module, then since $\Ext_A^*(M,M)$ is finitely generated as a module over the Noetherian ring $\Hoch^*(A)$, it is a Noetherian ring itself. In particular, the cohomology ring $\Coh^*(A) = \Ext_A^*(k,k)$ is Noetherian. Now consider $\Ext_A^*(M,M)$ as a module over $\Coh^* ( A )$, via the ring homomorphism $- \ot_k M$; we must show that it is finitely generated as such. However, by \cite[Theorem 9.3.9]{Witherspoon}, this is the case if and only if $\Ext_A^*(k,M \ot_k D(M))$ is a finitely generated right $\Coh^*(A)$-module via the Yoneda product, where $D(M) = \Hom_k(M,k)$. The latter holds because $A$ satisfies \textbf{Fg}. Namely, $\Ext_A^*(k,M \ot_k D(M))$ is finitely generated over $\Hoch^*(A)$, via the ring homomorphism $- \ot_A k \colon \Hoch^*(A) \to \Coh^*(A)$ followed by Yoneda product. Thus $\Ext_A^*(k,M \ot_k D(M))$ must be a finitely generated right $\Coh^*(A)$-module via the Yoneda product. This shows that $A$ satisfies \textbf{Fgt}.
\end{proof}

A finite tensor category $( \C, \ot, \unit )$ is called \emph{braided} if there exist functorial isomorphisms $M \ot N \xrightarrow{b_{M,N}} N \ot M$ for all $M,N \in \C$, and these satisfy the hexagonal identities from \cite[Definition 8.1.1]{EGNO}. If moreover $b_{N,M} \circ b_{M,N}$ equals the identity on $M \ot N$ for all $M$ and $N$, then the tensor category is called \emph{symmetric}. The following result shows that every finite symmetric tensor category over a field of characteristic zero satisfies \textbf{Fgt}. The proof relies on Deligne's famous characterization from \cite{Deligne} of such categories over algebraically closed ground fields.

\begin{theorem}\label{thm:symmetricfgc}
Every finite symmetric tensor category over a field of characteristic zero satisfies \emph{\textbf{Fgt}}.
\end{theorem}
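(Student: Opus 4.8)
The plan is to combine Deligne's structure theorem with the algebra-level results already established, reducing the statement about tensor categories to a statement about skew group algebras of exterior algebras, and then to invoke the classical finite generation results for cocommutative Hopf algebras. First I would reduce to the case of an algebraically closed ground field. If $K$ denotes an algebraic closure of $k$ and $\C$ is a finite symmetric tensor category over $k$, then $\C \otimes_k K$ is a finite symmetric tensor category over $K$; one checks that $\Coh^*(\C) \otimes_k K \simeq \Coh^*(\C \otimes_k K)$ and, more generally, that $\Ext^*$-groups base change, using that everything is finite-dimensional over a field. Since $K$ is faithfully flat over $k$, Noetherianity of $\Coh^*(\C)$ and finite generation of $\Ext^*_{\C}(M,M)$ follow from the corresponding statements over $K$. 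So I may assume $k = K$ is algebraically closed of characteristic zero.

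Next, by Deligne's theorem (\cite{Deligne}), a finite symmetric tensor category over an algebraically closed field of characteristic zero is equivalent, as a symmetric tensor category, to $\Rep(G, z)$ for some finite group $G$ with a central element $z$ of order dividing $2$ acting as the parity operator — that is, to the representation category of a finite-dimensional triangular Hopf algebra of the form $\Lambda \rtimes kG$, where $\Lambda$ is an exterior algebra on a finite-dimensional vector space (equipped with a suitable $\mathbb{Z}/2$-grading). Here $G$ acts on $\Lambda$ by $k$-algebra automorphisms. Since categorical equivalences of tensor categories preserve $\Ext$-groups and the cohomology ring, it suffices to show that the Hopf algebra $A = \Lambda \rtimes kG$ satisfies \textbf{Fgt}; by Lemma~\ref{lem:fgversusfgc} this is equivalent to $A$ satisfying \textbf{Fg}.

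The key step is then to apply Theorem~\ref{thm:skewgroupalgebras}. The exterior algebra $\Lambda$ is a finite-dimensional (hence Noetherian, $k$-projective) algebra carrying an action of the finite group $G$, and $A = \Lambda \rtimes kG$ is the associated skew group algebra. Since $\car k = 0$, the order of $G$ is invertible in $k$, so part (2) of Theorem~\ref{thm:skewgroupalgebras} applies: $\Lambda \rtimes kG$ satisfies \textbf{Fgb} if and only if $\Lambda$ does. By Corollary~\ref{cor:FGsepequiv} (or Lemma~\ref{lem:finitenessassumptions}, noting that over an algebraically closed field the semisimplicity hypothesis on $\Lambda/\mathfrak{r} \otimes_k \Lambda/\mathfrak{r}$ is automatic), the conditions \textbf{Fgb} and \textbf{Fg} are equivalent for each of $\Lambda$ and $A$. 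Thus it remains to verify that the exterior algebra $\Lambda$ itself satisfies \textbf{Fg}. But $\Lambda = \Lambda(W)$ is a finite-dimensional graded-commutative, hence a finite-dimensional cocommutative Hopf algebra (it is the restricted enveloping algebra, or the group algebra in the super sense; more directly, it is isomorphic as an algebra to a quotient of a polynomial ring, or one can invoke that $\Lambda(W) \simeq k[\mathbb{Z}/2 \times \cdots]^*$-type structures), and finite generation of cohomology for finite-dimensional cocommutative Hopf algebras is the theorem of Friedlander–Suslin (\cite{FriedlanderSuslin}); alternatively, the Hochschild cohomology of an exterior algebra is classically known to be finitely generated and Noetherian by direct computation. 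Either route gives \textbf{Fg} for $\Lambda$.

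The main obstacle — or at least the point requiring the most care — is the base-change step to an algebraically closed field: one must be sure that forming $\C \otimes_k K$ genuinely yields a finite symmetric tensor category (finiteness of the number of simples, existence of projective covers, and exactness of $\ot$ are all preserved, but this uses that $\C$ is finite over a field) and that cohomology commutes with this base change. A secondary subtlety is identifying precisely which finite-dimensional Hopf algebra realizes the Deligne super-group as a skew group algebra of an exterior algebra, and checking that the $G$-action there is by algebra automorphisms so that Theorem~\ref{thm:skewgroupalgebras} literally applies; this is exactly the realization alluded to in the introduction. Once these two points are granted, the argument is a short chain: base change $\rightarrow$ Deligne $\rightarrow$ Lemma~\ref{lem:fgversusfgc} $\rightarrow$ Theorem~\ref{thm:skewgroupalgebras}(2) $\rightarrow$ Corollary~\ref{cor:FGsepequiv} $\rightarrow$ finite generation of cohomology for the exterior algebra.
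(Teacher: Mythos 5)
Your argument is essentially the paper's own proof: reduce to an algebraically closed field by base change (the paper does this via the Negron--Plavnik weak Hopf algebroid construction, which is precisely the justification your flagged ``main obstacle'' needs), invoke Deligne plus the Andruskiewitsch--Etingof--Gelaki realization to replace $\C$ by $\mod (\wedge V \rtimes G)$, use that the exterior algebra satisfies \textbf{Fg}, transfer across the skew group algebra via Theorem~\ref{thm:skewgroupalgebras}(2) (with $|G|$ invertible since $\car k=0$) together with Lemma~\ref{lem:finitenessassumptions}/Remark~\ref{rem:conditionssatisfied}, and finish with Lemma~\ref{lem:fgversusfgc}. One caveat: your Friedlander--Suslin justification for the exterior algebra does not apply in characteristic zero, since $\wedge V$ is then only a Hopf \emph{super}algebra rather than an ordinary cocommutative Hopf algebra, but your alternative justification (the known \textbf{Fg} property of exterior algebras, established by direct computation) is exactly what the paper cites, so the argument stands.
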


\begin{proof}
Let $k$ be a field of characteristic zero, and $( \C, \ot, \unit )$ a finite symmetric tensor category over $k$. As explained in \cite[Section 5.1]{NegronPlavnik}, for every field extension $k \subseteq K$, there is a finite tensor category $( \C_K, \ot_K, \unit_K )$ obtained as a base change of a certain weak Hopf algebroid. Since $( \C, \ot, \unit )$ is symmetric, so is $( \C_K, \ot_K, \unit_K )$, and by \cite[Lemma 5.2]{NegronPlavnik} the category $( \C, \ot, \unit )$ satisfies \textbf{Fgt} if and only if $( \C_K, \ot_K, \unit_K )$ does. Hence we may assume that the field $k$ is algebraically closed. 

By \cite[Lemma 9.11.3]{EGNO}, every finite tensor category -- in particular our category $( \C, \ot, \unit )$ -- trivially has subexponential growth. In other words, for every object $M \in \C$, there exists a natural number $n(M)$ with the property that for every integer $t \ge 0$, the length of the object $M^{\ot t}$ is at most $n(M)^t$. We may therefore apply Deligne's characterization result \cite[Th{\'e}or{\`e}me 0.6]{Deligne}: as a finite tensor category, $( \C, \ot, \unit )$ is equivalent to the category of super-representations of a certain super-group $sG$. As explained in \cite[Section 7.1]{NegronPlavnik}, using \cite[Corollary 2.3.5]{AndruskiewitschEtingofGelaki}, this implies that $( \C, \ot, \unit )$ is actually equivalent to the module category of a finite dimensional triangular Hopf $k$-algebra in the form of a skew group algebra $\wedge V \rtimes G$, where $G$ is a certain finite group, $V$ is a certain finite dimensional $k$-vector space (and a $kG$-module), and $\wedge V$ is the exterior algebra on $V$. We are therefore done if we can show that the Hopf algebra $\wedge V \rtimes G$ satisfies \textbf{Fgt}. 

By \cite[Theorem 5.5]{BerghOppermann} and \cite[Proposition 9.1]{ErdmannSolberg}, every finite dimensional exterior algebra -- in particular $\wedge V$ -- satisfies \textbf{Fg}; this can also be deduced from \cite[Theorem 4.1]{BGSS}. Now since $k$ has characteristic zero, the order of the group $G$ is trivially invertible in $k$, and hence from Lemma \ref{lem:finitenessassumptions}(2), Remark \ref{rem:conditionssatisfied}(1) and Theorem \ref{thm:skewgroupalgebras} we see that the skew group algebra $\wedge V \rtimes G$ also satisfies \textbf{Fg}. Then by Lemma \ref{lem:fgversusfgc} it also satisfies \textbf{Fgt}, and we are done.
\end{proof}

When a finite tensor category $( \C, \ot, \unit )$ satisfies \textbf{Fgt}, then the Krull dimension of its cohomology ring $\Coh^*( \C )$, defined as its rate of growth $\gamma \left ( \Coh^*( \C ) \right )$ as a graded $k$-vector space, is finite. Now define $\Ho ( \C )$ to be just $\Coh^*( \C )$ when the ground field has characteristic two, and the even part $\Coh^{2*}( \C )$ if not; thus $\Ho ( \C )$ is graded and commutative in the ordinary sense. It follows from \cite[Sections 5.3 and 5.4]{Benson2} that the Krull dimension of $\Coh^*( \C )$ equals that of $\Ho ( \C )$, and that the latter can be defined either as the rate of growth or in terms of chains of prime ideals. We shall denote this number by $\Kdim \C$, and speak of <<the Krull dimension of $\C$.>> By \cite[Theorem 4.1 and Remark 4.2]{BerghPlavnikWitherspoon}, it is equal to the complexity of the unit object $\unit$, or, equivalently, the maximal complexity obtained by the objects of $\C$. We end this section with some results showing that when \textbf{Fgt} holds, then the Krull dimension is linked to at least two important invariants of the finite tensor category in question: the representation dimension, and the dimension of the stable category.

Let $\A$ be an abelian category, and $M$ and $N$ two objects of $\A$. We then define the \emph{left $M$-resolution dimension of $N$}, denoted $\lresdim_M (N)$, to be the infimum of integers $t \ge 2$ with the property that there exists an exact sequence
\begin{center}
\begin{tikzpicture}
\diagram{d}{3em}{3em}{
0 & M_{t-2} & M_{t-3} & \cdots & M_0 & N & 0 \\
 };
\path[->, font = \scriptsize, auto]
(d-1-1) edge (d-1-2)
(d-1-2) edge (d-1-3)
(d-1-3) edge (d-1-4)
(d-1-4) edge (d-1-5)
(d-1-5) edge (d-1-6)
(d-1-6) edge (d-1-7);
\end{tikzpicture}
\end{center}
in which each $M_i$ belongs to $\add_{\A} (M)$, and which remains exact when we apply $\Hom_{\A} (M,-)$. Similarly, we define the \emph{right $M$-resolution dimension of $N$}, denoted $\rresdim_M (N)$, to be the infimum of integers $s \ge 2$ with the property that there exists an exact sequence
\begin{center}
\begin{tikzpicture}
\diagram{d}{3em}{3em}{
0 & N & M_0 & \cdots & M_{s-3} & M_{s-2} & 0 \\
 };
\path[->, font = \scriptsize, auto]
(d-1-1) edge (d-1-2)
(d-1-2) edge (d-1-3)
(d-1-3) edge (d-1-4)
(d-1-4) edge (d-1-5)
(d-1-5) edge (d-1-6)
(d-1-6) edge (d-1-7);
\end{tikzpicture}
\end{center}
in which each $M_i$ belongs to $\add_{\A} (M)$, and which remains exact when we apply $\Hom_{\A} (-,M)$.

\begin{definition}
For an abelian category $\A$, we define the \emph{representation dimension}, denoted $\repdim \A$, as follows. If $\A$ is semisimple, we set $\repdim \A =0$, and if not, we define it to be the infimum of all integers $d \ge 2$ with the property that there exists an object $M \in \A$ for which $\lresdim_M (N) \le d$ and $\rresdim_M (N) \le d$ for all objects $N \in \A$.
\end{definition}

The representation dimension was introduced for (the module categories of) Artin algebras by Auslander in \cite{Auslander1}, using a slightly different definition. He defined it to be one for semisimple algebras, and used the global dimensions of certain attached algebras if not. The motivation was to measure how far an algebra is from having finite representation type. Namely, an algebra is of finite representation type if and only if its representation dimension is at most two.

By \cite[Lemma 2.1]{EHIS}, when the abelian category $\A$ is Krull-Schmidt and has enough projective and injective objects, then 
$$\repdim \A = \inf \left \{ \gldim \Hom_{\A}(M,M) \mid M \text{ generates and cogenerates } \A \right \}$$
In particular, this is the case for finite tensor categories (cf.\ \cite[Section 1.8]{EGNO}). From the definition, we see directly that such a category has finite representation type if and only if its representation dimension is either zero (in which case the category is semisimple, i.e.\ a fusion category), or two. Therefore, as for algebras, the representation dimension of a finite tensor category should in some sense measure how far it is from having finite representation type.

\begin{remark}\label{rem:finite}
The representation dimension of a finite tensor category is always finite. Namely, as explained in \cite[Section 1.8]{EGNO}, the underlying abelian category is equivalent to the category of finitely generated modules over some finite dimensional algebra. By \cite[Corollary 1.2]{Iyama}, every such algebra has finite representation dimension.
\end{remark}

By \cite{Happel}, since a finite tensor category $( \C, \ot, \unit )$ is quasi-Frobenius, its stable category $\stable \C$ is triangulated -- in fact tensor triangulated. Recall that $\stable \C$ has the same objects as $\C$, but the morphism spaces are obtained as the  quotients of the $\C$-morphisms by the ones that factor through projective objects. The distinguished triangles correspond to the short exact sequences in $\C$. Now recall the notion of the dimension of a triangulated category $( \T, \Sigma )$ from \cite{Rouquier2}. Given subcategories $\X, \Y \subseteq \T$, we define $\X \ast \Y$ as the full subcategory of $\T$  formed by the objects $Z$ for which there exists a distinguished triangle
\begin{center}
\begin{tikzpicture}
\diagram{d}{3em}{3em}{
X & Z & Y & \Sigma X \\
 };
\path[->, font = \scriptsize, auto]
(d-1-1) edge (d-1-2)
(d-1-2) edge (d-1-3)
(d-1-3) edge (d-1-4);
\end{tikzpicture}
\end{center}
with $X \in \X$ and $Y \in \Y$. Now for an object $X \in \T$, we define $\thick_{\T}^1 (X) = \add_{\T} \left ( \{ \Sigma^t X \}_{t \in \mathbb{Z}} \right )$, and then for $n \ge 2$ 
$$\thick_{\T}^n (X) = \thick_{\T}^1 \left ( \thick_{\T}^{n-1} (X) \ast \thick_{\T}^1 (X) \right )$$
Informally, this is the full subcategory of $\T$ containing the objects that can be generated by $X$ in $n$ steps. 

\begin{definition}
The \emph{dimension} of a triangulated category $\T$, denoted $\dim \T$, is the infimum of all integers $d \ge 0$ for which there exists an object $X \in \T$ with $\thick_{\T}^{d+1} (X) = \T$.
\end{definition}

The dimension of the stable category $\stable \C$ provides a lower bound for the representation dimension. More precisely, by adapting the first part of the proof of \cite[Proposition 3.7]{Rouquier1}, we see that
$$\dim \stable \C + 2 \le \repdim \C$$
When the tensor category satisfies \textbf{Fgt}, then the following result, which follows directly from \cite[Theorem 1.1]{BIKO}, Remark \ref{rem:finite} and the above inequality, links these two invariants to the Krull dimension, and shows that all these invariants are finite.

\begin{theorem}\label{thm:bounds}
If $( \C, \ot, \unit )$ is a finite tensor category satisfying \emph{\textbf{Fgt}}, then
$$\Kdim \C \le \dim \stable \C + 1 \le \repdim \C -1 < \infty$$
\end{theorem}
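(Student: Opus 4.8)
The plan is to assemble the statement from three ingredients, only one of which is substantial. The rightmost inequality $\dim \stable \C + 1 \le \repdim \C - 1$ is merely a rewriting of the bound $\dim \stable \C + 2 \le \repdim \C$ recorded just above (the adaptation of the first part of \cite[Proposition 3.7]{Rouquier1}). The finiteness $\repdim \C - 1 < \infty$ is immediate from Remark \ref{rem:finite}, which supplies $\repdim \C < \infty$ via Iyama's theorem. Once the leftmost inequality is in hand, the chain $\Kdim \C \le \dim \stable \C + 1 \le \repdim \C - 1 < \infty$ also forces $\Kdim \C$ and $\dim \stable \C$ to be finite. So all the work sits in the leftmost inequality $\Kdim \C \le \dim \stable \C + 1$, equivalently $\dim \stable \C \ge \Kdim \C - 1$.

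For this I would invoke \cite[Theorem 1.1]{BIKO}. By Happel's theorem $\stable \C$ is a triangulated category (indeed tensor triangulated), since $\C$ is quasi-Frobenius, and the cohomology ring $\Coh^* ( \C )$ acts on it centrally: the maps $\psi_M$ turn the graded group $\bigoplus_{n \ge 0} \Hom_{\stable \C}(M, \Sigma^n N)$ into a graded $\Coh^* ( \C )$-module, and this action is graded-commutative up to sign, as discussed above. The hypothesis \textbf{Fgt} says $\Coh^* ( \C )$ is Noetherian and $\Ext_{\C}^*(M,M)$ is a finitely generated $\Coh^* ( \C )$-module for every object $M$; since $\Ext_{\C}^*(M,N)$ is a direct summand of $\Ext_{\C}^*(M \oplus N, M \oplus N)$, the module $\Ext_{\C}^*(M,N)$ is then finitely generated for every pair $M,N$. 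As $\Hom_{\stable \C}(M, \Sigma^n N) \cong \Ext_{\C}^n(M,N)$ for all $n \ge 1$, the graded $\Hom$-modules of $\stable \C$ are Noetherian over $\Coh^* ( \C )$. This places us in the setting of \cite[Theorem 1.1]{BIKO}, whose conclusion gives $\dim \stable \C \ge \Kdim \Coh^* ( \C ) - 1$; and $\Kdim \Coh^* ( \C ) = \Kdim \C$ by the discussion preceding the theorem, so $\Kdim \C \le \dim \stable \C + 1$. Combining the three ingredients yields the theorem.

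The only delicate point is to make sure the hypotheses of \cite[Theorem 1.1]{BIKO} are met in exactly the form we use. If that result is phrased for stable module categories of finite dimensional self-injective algebras rather than for abstract triangulated categories equipped with a central action of a graded-commutative Noetherian ring, then I would first replace $\C$ by an equivalent category $\mod \Lambda$ over such an algebra $\Lambda$ (using \cite[Section 1.8]{EGNO} together with the quasi-Frobenius property), transport the central action of $\Coh^* ( \C )$ to the stable module category of $\Lambda$, and re-derive the finite generation of the graded $\Hom$-modules from \textbf{Fgt}; this is routine. The genuinely nontrivial content -- the lower bound $\dim \stable \C \ge \Kdim \C - 1$ itself -- is imported wholesale from \cite{BIKO}, so beyond this bookkeeping there is no real obstacle.
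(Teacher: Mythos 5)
Your proposal is correct and follows essentially the same route as the paper, which likewise derives the chain by combining \cite[Theorem 1.1]{BIKO} (giving $\Kdim \C \le \dim \stable \C + 1$ via the central, graded-commutative action of $\Coh^*(\C)$ and the Noetherian finite generation supplied by \textbf{Fgt}), the inequality $\dim \stable \C + 2 \le \repdim \C$ obtained by adapting \cite[Proposition 3.7]{Rouquier1}, and Remark \ref{rem:finite} for finiteness of the representation dimension.
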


Finally, we combine this result with Theorem \ref{thm:symmetricfgc}, to obtain bounds for all finite symmetric tensor categories over fields of characteristic zero. In fact, when the ground field is algebraically closed, then we obtain exact values, once again thanks to Deligne's characterization.

\begin{theorem}\label{thm:symmetricrepdim}
If $( \C, \ot, \unit )$ is a finite symmetric tensor category over a field of characteristic zero, then 
$$\Kdim \C \le \dim \stable \C + 1 \le \repdim \C -1 < \infty$$
Moreover, when the field is algebraically closed, then equalities hold among the finite integers:
$$\Kdim \C = \dim \stable \C + 1 = \repdim \C -1 < \infty$$
\end{theorem}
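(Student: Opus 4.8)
The plan is to combine Theorem~\ref{thm:symmetricfgc} with Theorem~\ref{thm:bounds} for the inequality part, and then use Deligne's structure theorem once more to upgrade the inequalities to equalities in the algebraically closed case. First, since $(\C,\ot,\unit)$ is a finite symmetric tensor category over a field of characteristic zero, Theorem~\ref{thm:symmetricfgc} tells us it satisfies \textbf{Fgt}. Plugging this into Theorem~\ref{thm:bounds} immediately gives the chain of inequalities
$$\Kdim \C \le \dim \stable \C + 1 \le \repdim \C -1 < \infty,$$
which settles the first assertion for an arbitrary (not necessarily algebraically closed) ground field of characteristic zero.

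For the second assertion, I would reduce to the algebraically closed situation directly --- but here the field is \emph{already} assumed algebraically closed, so Deligne's characterization, as invoked in the proof of Theorem~\ref{thm:symmetricfgc}, applies: $\C$ is equivalent, as a finite tensor category, to $\mod (\wedge V \rtimes G)$ for a finite dimensional vector space $V$ (a $kG$-module) and a finite group $G$ whose order is invertible in $k$. Since the invariants $\Kdim$, $\dim\stable(-)$ and $\repdim$ are defined purely in terms of the abelian and triangulated structure, they agree for $\C$ and for $\mod(\wedge V \rtimes G)$. So it suffices to prove the equalities for the algebra $R = \wedge V \rtimes G$. The key observation is that the cohomology ring $\coh^*(R) = \Ext_R^*(k,k)$ has Krull dimension equal to $\dim_k V$: by Theorem~\ref{thm:skewgroupalgebras} (or the standard descent of cohomology along the order-invertible skew group extension, using $\Ext_R^*(k,k)$ as a direct summand of $\Ext_{\wedge V}^*(k,k)$ with its $G$-action) the cohomology of $R$ is the ring of $G$-invariants of $\coh^*(\wedge V) \cong k[x_1,\dots,x_n]$ (polynomial on the degree-$2$ part, $n = \dim_k V$, in characteristic zero), hence a polynomial-type ring of Krull dimension $n$.

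The heart of the matter is therefore to show $\repdim \mod(\wedge V \rtimes G) = n+1$ and $\dim \stable \mod(\wedge V \rtimes G) = n-1$, matching $\Kdim\C = n$. The upper bound $\repdim \le n+1$ and $\dim\stable \le n-1$ I would get from the known computations for the exterior algebra: $\stable\mod(\wedge V)$ has Rouquier dimension $n-1$ and $\repdim \wedge V = n+1$ (these are classical, e.g.\ via the Bernstein--Gelfand--Gelfand correspondence relating $\stable\mod\wedge V$ to $\D^b(\mathbb{P}^{n-1})$, whose Rouquier dimension is $n-1$), together with the fact that a separable/order-invertible skew group extension cannot increase these dimensions --- indeed $\mod(\wedge V)$ and $\mod(\wedge V\rtimes G)$ are separably equivalent, and one checks that the generating object for $\wedge V$ induced up to $\wedge V\rtimes G$ still generates in the same number of steps (the induction functor is exact and the summand-retract through $\wedge V$ controls the resolution dimensions). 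The lower bounds $\Kdim\C \le \dim\stable\C + 1$ and $\dim\stable\C + 2 \le \repdim\C$ are already in the inequality chain, so once the two ends $\Kdim\C = n$ and $\repdim\C \le n+1$ are pinned down, the squeeze forces all three equalities. The main obstacle I anticipate is the clean transfer of the \emph{exact} values of $\dim\stable$ and $\repdim$ across the skew group construction --- the finite-generation transfer in Theorem~\ref{thm:FGBsepequiv} is qualitative, so one needs a separate (but routine, via exactness of induction/restriction and the retract $A \mid U\ot_B V$) argument that these triangulated- and representation-dimension invariants are preserved, not merely kept finite, under an order-invertible skew group extension.
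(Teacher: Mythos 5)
Your first chain of inequalities is exactly the paper's argument (Theorem \ref{thm:symmetricfgc} plus Theorem \ref{thm:bounds}), and your overall squeeze strategy for the algebraically closed case -- pin down $\Kdim \C = \dim_k V$ and establish the single upper bound $\repdim (\wedge V \rtimes G) \le \dim_k V + 1$, then let the inequality chain force equalities -- is also the right shape. The Krull dimension computation via $\Coh^*(\wedge V \rtimes G) \simeq \Coh^*(\wedge V)^G$ is fine (though Theorem \ref{thm:skewgroupalgebras} does not say this; it is the standard averaging/direct summand argument, and the paper instead gets $\Kdim \C = \ell\ell(\wedge V\rtimes G)-1$ via maximal complexity, \cite{BerghPlavnikWitherspoon} and Peacock's complexity transfer \cite{Peacock}).

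The genuine gap is the step you yourself flag and then dismiss as routine: transferring the upper bounds for $\repdim$ (and $\dim\stable$) from $\wedge V$ to $\wedge V \rtimes G$ across the separable equivalence. This is not a formal consequence of ``exactness of induction plus the retract $B \mid V \ot_A U$.'' To bound $\repdim(\wedge V\rtimes G)$ by $\repdim(\wedge V)$ one must produce a generator-cogenerator $N$ (say $V \ot_A M \oplus B \oplus DB$) and show that the induced $\add N$-resolutions of arbitrary $B$-modules remain exact under $\Hom_B(N,-)$; the problematic functor is $\Hom_B(V\ot_A M,-)$, whose exactness on the induced sequences requires the adjunction together with one-sided projectivity bookkeeping, and is precisely the nontrivial content of arguments of the type in \cite{BerghErdmann} -- representation dimension is not known to be invariant under separable equivalence in general, so this cannot be waved through. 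The paper avoids the issue entirely: it needs only the one bound $\repdim(\wedge V\rtimes G) \le \ell\ell(\wedge V \rtimes G)$, which follows from Auslander's classical estimate for selfinjective algebras \cite{Auslander1}, combined with \cite{ReitenRiedtmann} to see that $\wedge V \rtimes G$ is selfinjective and has the same Loewy length $\dim_k V + 1$ as $\wedge V$. If you replace your transfer claim by this selfinjectivity-plus-Loewy-length argument (or by an actual proof or citation of a repdim transfer result for separable division), your proof closes; as written, the key upper bound is asserted rather than established.
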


\begin{proof}
The first part is the result of combining Theorem \ref{thm:symmetricfgc} and Theorem \ref{thm:bounds}. For the second part, suppose that the ground field $k$ is algebraically closed. Recall from the proof of Theorem \ref{thm:symmetricfgc} that our tensor category is equivalent to the module category of a finite dimensional triangular Hopf $k$-algebra in the form of a skew group algebra $\wedge V \rtimes G$, where $G$ is a certain finite group, $V$ is a certain finite dimensional $k$-vector space (and a $kG$-module), and $\wedge V$ is the exterior algebra on $V$. Recall also that $\wedge V$ and $\wedge V \rtimes G$ are separably equivalent, and that they satisfy \textbf{Fg}.

As pointed out after the proof of Theorem \ref{thm:symmetricfgc}, the Krull dimension of $\C$ equals the maximal complexity obtained by its objects. This equals the maximal complexity obtained by the finitely generated left $( \wedge V \rtimes G )$-modules, and by \cite[Theorem 1]{Peacock} the latter integer equals the maximal complexity obtained over $\wedge V$. However, the exterior algebra is local, with $k$ as its simple module. The maximal complexity therefore equals that of $k$, and it is well known that this is the same as the dimension of $V$ as a $k$-vector space. This again equals $\ell \ell \left ( \wedge V \right ) - 1$, where $\ell \ell (A)$ denotes the Loewy length of a finite dimensional algebra $A$. By \cite[Theorem 1.1 and Theorem 1.3(e)(ii)]{ReitenRiedtmann}, the Loewy lengths of $\wedge V$ and $\wedge V \rtimes G$ are the same, hence 
$$\ell \ell ( \wedge V \rtimes G ) - 1 = \Kdim \C$$
Finally, by \cite[Theorem 1.1 and Theorem 1.3(c)(iii)]{ReitenRiedtmann}, the algebra $\wedge V \rtimes G$ is selfinjective, since the exterior algebra $\wedge V$ is. It then follows from \cite[Section III.5]{Auslander1} that
$$\repdim \C = \repdim ( \wedge V \rtimes G ) \le \ell \ell ( \wedge V \rtimes G )$$
and so by combining what we have shown we obtain
$$\ell \ell ( \wedge V \rtimes G ) - 1 = \Kdim \C \le \dim \stable \C + 1 \le \repdim \C -1 \le \ell \ell ( \wedge V \rtimes G ) -1$$
This concludes the proof.
\end{proof}

\section{Some further applications}\label{sec:applications2}

In this final section, we explore some further applications of Theorem \ref{thm:FGBsepequiv}. As before, let $k$ be a commutative Noetherian ring, and $A$ and $B$ two Noetherian $k$-algebras which are projective as $k$-modules. Again, unless otherwise specified, all modules are assumed to be finitely generated left modules. We state all the results in this section in terms of the finiteness assumption \textbf{Fgb}; by combining the results with Lemma \ref{lem:finitenessassumptions} and Remark \ref{rem:conditionssatisfied}, one obtains versions with the assumption \textbf{Fg}, for example when $k$ is an algebraically closed or a perfect field.

To any algebra, we can associate a number of triangulated categories using modules. When the corresponding triangulated categories are equivalent for two algebras, one may ask if \textbf{Fgb} is an invariance under the particular equivalence in question. For example, if $A$ and $B$ are derived equivalent, meaning that their bounded derived categories $\derived ( \mod A )$ and $\derived ( \mod B )$ are equivalent as triangulated categories, then is it true that $A$ satisfies \textbf{Fgb} if and only if $B$ does? This is indeed true, and it follows more or less immediately from Rickard's famous description of derived equivalences (see also \cite{KulshammerPsaroudakisSkartsaterhagen}). Namely, by \cite[Theorem 6.4(b)\&(e)]{Rickard1}, any equivalence $\derived ( \mod A ) \to \derived ( \mod B )$ is induced by a tilting complex. Then since both $A$ and $B$ are projective as $k$-modules, it follows from \cite[Corollary 2.3 and Proposition 2.5]{Rickard2} that $A$ satisfies \textbf{Fgb} if and only if $B$ does. Here we are using the fact that we can determine finite generation of cohomology by looking at the stalk complexes, since every object can be filtered by such, using finitely many distinguished triangles. Thus, for example, $\Ext_{A^e}^*(A,X)$ is a Noetherian right $\Hoch^*(A)$-module for every $A$-bimodule $X$ if and only if $\Ext_{A^e}^*(A,X^*)$ is a Noetherian right $\Hoch^*(A)$-module for every object $X^* \in \derived ( \mod A )$.

What about singular equivalences? Recall that the singularity category $\sing (A)$ of $A$ is the Verdier quotient $\derived ( \mod A ) / \perf A$, where $\perf A$ is the thick subcategory of $\derived ( \mod A )$ formed by the perfect complexes. It is not known whether \textbf{Fgb} is invariant under singular equivalences. That is, if the singularity categories $\sing (A)$ and $\sing (B)$ are equivalent as triangulated categories, and $A$ satisfies \textbf{Fgb}, then it is unknown whether $B$ must also satisfy \textbf{Fgb}. However, it is known to hold for some special kinds of singular equivalences, and as we shall see, these results can be directly recovered from Theorem \ref{thm:FGBsepequiv}.

Following \cite{Broue}, the algebras $A$ and $B$ are called \emph{stably equivalent of Morita type} if there exist bimodules ${_AU_B}$ and ${_BV_A}$ -- projective on both sides -- with the property that $U \ot_B V \simeq A \oplus P$ and $V \ot_A U \simeq B \oplus Q$ for some projective bimodules ${_AP_A}$ and ${_BQ_B}$. If we relax the requirements on $P$ and $B$, and just assume that they have finite projective dimension as bimodules, then $A$ and $B$ are called \emph{singularly equivalent of Morita type} (this latter concept seems to originate from the unpublished manuscript \cite{ChenSun}). When this holds, then the tensor product $V \ot_A -$ induces a singular equivalence $\sing (A) \to \sing (B)$ (cf.\ \cite[Proposition 2.3]{ZhouZimmermann}). In particular, when $A$ and $B$ are selfinjective, then a stable equivalence of Morita type induces an equivalence between the stable module categories. Since the equivalences just defined are special kinds of separable equivalences, we can use Theorem \ref{thm:FGBsepequiv} directly and obtain the following result.

\begin{theorem}\label{thm:stableequiv}
Let $k$ be a commutative Noetherian ring and $A$ and $B$ two Noetherian $k$-algebras which are projective as $k$-modules. Furthermore, suppose that $A$ and $B$ are singularly equivalent of Morita type (as happens for example when they are stably equivalent of Morita type). Then $A$ satisfies \emph{\textbf{Fgb}} if and only if $B$ does.
\end{theorem}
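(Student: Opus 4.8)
The plan is to reduce Theorem \ref{thm:stableequiv} to Theorem \ref{thm:FGBsepequiv} by showing that a singular equivalence of Morita type is, in particular, a separable equivalence in the sense of the definition given above. First I would recall the data: bimodules ${_AU_B}$ and ${_BV_A}$, projective on both sides, with $U \ot_B V \simeq A \oplus P$ and $V \ot_A U \simeq B \oplus Q$, where now $P$ and $Q$ are only assumed to have finite projective dimension as bimodules (over $A^e$ and $B^e$ respectively). The point is that $A$ is a direct summand of $U \ot_B V$ as an $A$-bimodule, and $B$ is a direct summand of $V \ot_A U$ as a $B$-bimodule. Since $U$ and $V$ are projective as one-sided modules on both sides, the pair $\left ( {_AU_B}, {_BV_A} \right )$ witnesses that $A$ separably divides $B$ and that $B$ separably divides $A$; hence $A$ and $B$ are separably equivalent. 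By Theorem \ref{thm:FGBsepequiv}, $A$ satisfies \textbf{Fgb} if and only if $B$ does, and we are done.

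In writing this up I would be careful about one point: the definition of separable equivalence only requires that $B$ is a direct summand of $V \ot_A U$ and that $A$ is a direct summand of $U \ot_B V$ — it says nothing about the complementary summands $P$ and $Q$. So the finiteness of projective dimension of $P$ and $Q$ is not even needed for the argument; it is part of the definition of "singularly equivalent of Morita type" but plays no role here. This is worth a sentence, since a reader might expect that hypothesis to be used. The parenthetical remark "as happens for example when they are stably equivalent of Morita type" is then immediate: in that case $P$ and $Q$ are genuinely projective, which is a special case of having finite projective dimension, so stable equivalence of Morita type is a special case of singular equivalence of Morita type, which in turn is a special case of separable equivalence.

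The only genuinely substantive check is that the one-sided projectivity hypotheses in the definition of separable equivalence are met — namely that ${_AU_B}$ and ${_BV_A}$ are projective on both sides — but this is built into the definition of singular (and stable) equivalence of Morita type, so nothing needs to be verified beyond quoting it. I do not anticipate a real obstacle; the content of the theorem is entirely in Theorem \ref{thm:FGBsepequiv}, and this section's role is simply to observe that the classical notions of stable and singular equivalence of Morita type are subsumed by separable equivalence. I would phrase the proof in two or three sentences accordingly, citing \cite{Broue} and \cite{ChenSun} for the relevant definitions and Theorem \ref{thm:FGBsepequiv} for the conclusion.
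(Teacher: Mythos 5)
Your proposal is correct and follows exactly the paper's route: the paper deduces Theorem \ref{thm:stableequiv} by observing that a singular (in particular stable) equivalence of Morita type provides bimodules $U$ and $V$, projective on both sides, with $A$ a summand of $U \ot_B V$ and $B$ a summand of $V \ot_A U$, so the algebras are separably equivalent and Theorem \ref{thm:FGBsepequiv} applies. Your added remark that the finite projective dimension of the complementary summands plays no role is also consistent with the paper's argument.
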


We shall now generalize this result, and also recover the main result from \cite{Skartsaterhagen}. We start by introducing a generalized notion of separable equivalence for algebras. Recall first that a \emph{syzygy} of a module over a ring is a kernel in some projective resolution of the module.

\begin{definition}
The algebra $B$ \emph{separably quasi-divides} the algebra $A$ if the following hold: there exist bimodules ${_AU_B}$ and ${_BV_A}$ -- projective on both sides -- with the property that some $B$-bimodule syzygy of $B$ is a direct summand of $V \ot_A U$. If there exists such a pair of bimodules such that in addition some $A$-bimodule syzygy of $A$ is a direct summand of $U \ot_B V$, then $A$ and $B$ are \emph{separably quasi-equivalent}.
\end{definition}

Thus $B$ separably quasi-divides $A$ if there exists an exact sequence
\begin{center}
\begin{tikzpicture}
\diagram{d}{3em}{3em}{
0 & K & P_{n-1} & \cdots & P_0 & B & 0 \\
 };
\path[->, font = \scriptsize, auto]
(d-1-1) edge (d-1-2)
(d-1-2) edge (d-1-3)
(d-1-3) edge (d-1-4)
(d-1-4) edge (d-1-5)
(d-1-5) edge (d-1-6)
(d-1-6) edge (d-1-7);
\end{tikzpicture}
\end{center}
of $B$-bimodules, with each $P_i$ projective, and such that $K$ is a direct summand of $V \ot_A U$. The $K$ is then a syzygy of $B$ of \emph{degree} $n$, or an $n$th syzygy of $B$.

\begin{remark}\label{rem:syzygies}
(1) Since $B$ is a syzygy of itself, of degree zero, the notion of separable quasi-division/quasi-equivalence generalizes that of separable division/equivalence.

(2) Suppose that $A$ and $B$ are Artin $k$-algebras (for example when $k$ is a field). Then so are the enveloping algebras $A^e$ and $B^e$. Over such algebras, every (finitely generated left) module admits a minimal projective resolution, which is unique up to isomorphism. This resolution is a summand in every other projective resolution of the module. We denote by $\Omega_{A^e}^n(A)$ (and similarly for $B$) the $n$th kernel in the minimal projective $A$-bimodule resolution of $A$; this bimodule is then a summand of every $n$th syzygy of $A$. Now if $B$ separably quasi-divides $A$, with $U,V$ and $K$ as above, then $\Omega_{B^e}^n(B)$ is a direct summand of $K$, and therefore also of $V \ot_A U$. Thus $B$ separably quasi-divides $A$ if and only if there exist bimodules ${_AU_B}$ and ${_BV_A}$ -- projective on both sides -- with the property that $\Omega_{B^e}^n(B)$ is a direct summand of $V \ot_A U$ for some $n \ge 0$. Similarly, $A$ and $B$ are separably quasi-equivalent if and only if there exist such bimodules $U$ and $V$ for which $\Omega_{A^e}^m(A)$ is a direct summand of $U \ot_B V$ for some $m \ge 0$, and $\Omega_{B^e}^n(B)$ is a direct summand of $V \ot_A U$ for some $n \ge 0$.

(3) In the definition of separable quasi-equivalence, we do not require that the syzygies of $A$ and $B$ involved are of the same degree. That is, there is supposed to exist an $m$th syzygy of $A$ as a summand of $U \ot_B V$, and an $n$th syzygy of $B$ as a summand of $V \ot_A U$, but we do not require that $m=n$. Thus the notion of separable quasi-equivalence generalizes that of \emph{singular equivalence of Morita type with level}, introduced in \cite{Wang}.
\end{remark}
 
Recall that a $k$-algebra is \emph{Gorenstein} if its injective dimensions as a left and as a right module over itself are both finite; by \cite[Lemma A]{Zaks}, these numbers are then the same. The following result shows that the condition \textbf{Fgb} is an invariant under separable quasi-equivalence, provided the enveloping algebras are Gorenstein.

\begin{theorem}\label{thm:gorenstein}
Let $k$ be a commutative Noetherian ring and $A$ and $B$ two Noetherian $k$-algebras which are projective as $k$-modules. Suppose that $A$ satisfies \emph{\textbf{Fgb}}, that $B$ separably quasi-divides $A$, and that the enveloping algebra $B^e$ is Gorenstein. Then $B$ also satisfies \emph{\textbf{Fgb}}. In particular, if $A$ and $B$ are separably quasi-equivalent, and $A^e$ and $B^e$ are Gorenstein, then $A$ satisfies \emph{\textbf{Fgb}} if and only if $B$ does.
\end{theorem}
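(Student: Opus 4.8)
The plan is to reduce Theorem~\ref{thm:gorenstein} to Theorem~\ref{thm:FGBsepequiv} by replacing the bimodules $U$ and $V$ with suitable modifications that witness an honest separable division, at the cost of shifting by syzygies. Suppose $B$ separably quasi-divides $A$ via bimodules ${_AU_B}$ and ${_BV_A}$, projective on both sides, with an $n$th syzygy $K$ of $B$ a direct summand of $V \ot_A U$. The first step is to apply Corollary~\ref{cor:ext-iso} with $X = A$: for every $B$-bimodule $Y$ we obtain an isomorphism $\Ext_{A^e}^*(A, {^*V} \ot_B Y \ot_B U^*) \simeq \Ext_{B^e}^*(V \ot_A U, Y)$ of graded right $\Hoch^*(A)$-modules, with the right-hand side carrying its $\Hoch^*(A)$-structure through $\varphi_A^{V,U}$. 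As in the proof of Theorem~\ref{thm:FGBsepequiv}, the assumption \textbf{Fgb} for $A$ together with Remark~\ref{rem:conditionssatisfied}(2) forces $\Ext_{B^e}^*(V \ot_A U, Y)$ to be Noetherian over $\Hoch^*(A)$, hence over $\Ext_{B^e}^*(V \ot_A U, V \ot_A U)$, and therefore (taking the summand $K$) the graded right $\Ext_{B^e}^*(K,K)$-module $\Ext_{B^e}^*(K, Y)$ is Noetherian for every $B$-bimodule $Y$.

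The second step is to transport this Noetherian condition from the syzygy $K$ back to $B$ itself. Here I would use the standard dimension-shifting isomorphism over $B^e$: since $K$ is an $n$th syzygy of $B$, there is an isomorphism $\Ext_{B^e}^{i}(K, Y) \simeq \Ext_{B^e}^{i+n}(B, Y)$ for all $i > 0$ and all $B$-bimodules $Y$, compatible with the right Yoneda action up to the degree shift; concretely, $\Ext_{B^e}^{>0}(K,Y)$ agrees with $\Ext_{B^e}^{>n}(B,Y)$ as graded modules over $\Hoch^{*}(B)$ (after re-grading), and the low-degree discrepancy involves only finitely many $k$-finitely-generated pieces. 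The point is that $\Ext_{B^e}^*(B,Y)$ is Noetherian over $\Hoch^*(B)$ if and only if its truncation in degrees $> n$ is, because $k$ is Noetherian and each $\Ext_{B^e}^i(B,Y)$ is a finitely generated $k$-module (as $A$, $B$ are module-finite over $k$ and projective over $k$). Combining this with the previous paragraph — and noting that the ring $\Ext_{B^e}^*(K,K)$ and $\Hoch^*(B) = \Ext_{B^e}^*(B,B)$ likewise agree in high degrees — gives that $\Hoch^*(B)$ is Noetherian and $\Ext_{B^e}^*(B,Y)$ is a Noetherian $\Hoch^*(B)$-module for every $Y$, i.e.\ $B$ satisfies \textbf{Fgb}. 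The final assertion about separable quasi-equivalence follows by symmetry.

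The main obstacle — and the only place the Gorenstein hypothesis on $B^e$ enters — is making the dimension-shift argument legitimate. A priori, $K$ being a summand of $V\ot_A U$ controls $\Ext_{B^e}^*(K,-)$, but to pass to $\Ext_{B^e}^*(B,-)$ one needs the syzygy operation on $\mod B^e$ to be suitably invertible, or at least well-behaved, in high cohomological degree; this is exactly what Gorensteinness of $B^e$ supplies, via the theory of Gorenstein-projective (maximal Cohen–Macaulay) bimodules. Specifically, over the Gorenstein algebra $B^e$ the stable category of Gorenstein-projective modules is triangulated, every high syzygy is Gorenstein-projective, and the syzygy functor becomes an autoequivalence there, so $\Ext_{B^e}^{\gg 0}(K,Y) \cong \Ext_{B^e}^{\gg 0}(B,Y)$ up to a fixed shift for all $Y$ — and $\Ext$ in large degree only sees the Gorenstein-projective part. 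One must also check that the Noetherianity statements are insensitive to the fixed finite-degree truncation, which uses that $k$ is Noetherian and the relevant $\Ext$-groups are finitely generated over $k$; this is routine. I would organize the write-up as: (i) invoke Corollary~\ref{cor:ext-iso} and the argument of Theorem~\ref{thm:FGBsepequiv} to handle $K$; (ii) a short lemma on Gorenstein algebras giving the degree-shifted comparison $\Ext_{B^e}^{\gg 0}(K,-) \cong \Ext_{B^e}^{\gg 0}(B,-)$; (iii) a truncation observation reducing Noetherianity over $\Hoch^*(B)$ to high degrees; (iv) conclude, then dualize for quasi-equivalence.
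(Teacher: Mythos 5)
Your proposal follows essentially the same route as the paper's proof: your step (i) is exactly the paper's first half (run the argument of Theorem \ref{thm:FGBsepequiv}, using only one-sided projectivity of $U$ and $V$, to conclude that $\Ext_{B^e}^*\left ( K,Y \right )$ is Noetherian over $\Ext_{B^e}^*\left ( K,K \right )$ for every $B$-bimodule $Y$), and your steps (ii)--(iv) are the same high-degree comparison with $\Hoch^*(B)$ followed by the same truncation argument, using that each $\Ext_{B^e}^n\left ( B,Y \right )$ is a finitely generated module over the Noetherian ring $k$. The only real difference is how the comparison is implemented: the paper lifts a cocycle representing an element of $\Hoch^t(B)$ along the bimodule resolution to obtain a multiplicative map $\Hoch^t(B) \to \Ext_{B^e}^t\left ( K,K \right )$ which is an isomorphism for $t \gg 0$, whereas you appeal to the stable category of Gorenstein-projective $B^e$-modules.

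One correction of emphasis, which in your write-up is close to a gap: you place the use of the Gorenstein hypothesis in the dimension shift $\Ext_{B^e}^i\left ( K,Y \right ) \simeq \Ext_{B^e}^{i+n}\left ( B,Y \right )$ for $i \ge 1$, but that isomorphism is automatic for any $n$th syzygy and needs no hypothesis on $B^e$ whatsoever. What genuinely requires $B^e$ to be Gorenstein is the statement you make only in passing, namely that $\Ext_{B^e}^*\left ( K,K \right )$ and $\Hoch^*(B)$ agree in high degrees multiplicatively and compatibly with the two module actions; this comes down to $\Ext_{B^e}^{t}\left ( B,P \right ) = 0$ for $t \gg 0$ and $P$ a projective bimodule, i.e.\ to projective $B^e$-modules having finite injective dimension, which is exactly Gorensteinness. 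Since the Noetherian property produced in step (i) is over $\Ext_{B^e}^*\left ( K,K \right )$ via the Yoneda product, while the conclusion \textbf{Fgb} is about the Yoneda action of $\Hoch^*(B)$, this ring-level comparison is where the real work lies and must be proved rather than noted; your stable-category (equivalently Tate-cohomology) mechanism does deliver it once you track compositions, so the proposal is correct in outline and matches the published argument.
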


\begin{proof}
By definition, there exist bimodules ${_AU_B}$ and ${_BV_A}$ -- projective on both sides -- with the property that some $B$-bimodule syzygy $K$ of $B$ is a direct summand of $V \ot_A U$. The main part of the proof of Theorem \ref{thm:FGBsepequiv} only uses the fact that $U$ and $V$ are projective as one-sided modules, to conclude that $\Ext_{B^e}^* \left ( V \otimes_A U, Y \right )$ is a Noetherian right module over $\Ext_{B^e}^* \left ( V \otimes_A U, V \otimes_A U \right )$, for every $B$-bimodule $Y$. Now since $K$ is a direct summand of $V \otimes_A U$, it follows from \cite[Lemma 4.3]{Linckelmann} that $\Ext_{B^e}^* \left ( K, Y \right )$ is a Noetherian right module over $\Ext_{B^e}^* \left ( K,K \right )$, for every $B$-bimodule $Y$.

Since $K$ is a syzygy of $B$, there exists a projective $B$-bimodule resolution
\begin{center}
\begin{tikzpicture}
\diagram{d}{3em}{3em}{
\cdots & P_2 & P_1 & P_0 & B & 0 \\
 };
\path[->, font = \scriptsize, auto]
(d-1-1) edge (d-1-2)
(d-1-2) edge node{$d_2$} (d-1-3)
(d-1-3) edge node{$d_1$} (d-1-4)
(d-1-4) edge node{$d_0$} (d-1-5)
(d-1-5) edge (d-1-6);
\end{tikzpicture}
\end{center}
of $B$ in which $K$ is a kernel; say $K = \Ker d_s$, or $K = B$. Now take a homogeneous element $\eta \in \Hoch^* (B)$ of degree $t \ge s+1$, represented by a map $\Ker d_{t-1} \to B$. Lifting this map along the projective resolution, we obtain a map $\Ker d_{t+s} \to K$, representing an element in $\Ext_{B^e}^t \left ( K,K \right )$. The resulting map $\Hoch^t (B) \to \Ext_{B^e}^t \left ( K,K \right )$ respects the Yoneda product, and since $B^e$ is Gorenstein, it is an isomorphism of $k$-modules for $t \gg 0$. Then for every $B$-bimodule $Y$, since $\Ext_{B^e}^* \left ( K, Y \right )$ is a Noetherian right module over $\Ext_{B^e}^* \left ( K,K \right )$, we see that $\oplus_{n=n_Y}^{\infty} \Ext_{B^e}^n \left ( B, Y \right )$ is a Noetherian right module over $\Hoch^* (B)$ for some $n_Y$, depending on $Y$. But $\Ext_{B^e}^n \left ( B, Y \right )$ is a finitely generated $k$-module for all $n$, hence $\Ext_{B^e}^* \left ( B, Y \right )$ is a Noetherian right module over $\Hoch^* (B)$. This shows that $B$ satisfies \textbf{Fgb}, by Remark \ref{rem:conditionssatisfied}(2).
\end{proof}

\begin{remark}\label{rem:gorenstein}
(1) By \cite[Lemma 2.1]{BerghJorgensen}, the enveloping algebra of a finite dimensional Gorenstein algebra over a field is again Gorenstein. Hence when $k$ is a field, then in Theorem \ref{thm:gorenstein} we can assume that $A$ and $B$ are Gorenstein.

(2) Suppose that $k$ is a field and that $A / \mathfrak{r}_A \otimes_k A / \mathfrak{r}_A$ and $B / \mathfrak{r}_B \otimes_k B / \mathfrak{r}_B$ are semisimple, where $\mathfrak{r}_A$ is the radical of $A$, and $\mathfrak{r}_B$ is the radical of $B$; this happens for example when $k$ is algebraically closed or perfect, cf.\ Remark \ref{rem:conditionssatisfied}. Then by combining the above remark with Lemma \ref{lem:finitenessassumptions} and Theorem \ref{thm:gorenstein}, we recover \cite[Theorem 7.4]{Skartsaterhagen}.
\end{remark}

We end this paper with an application of Theorem \ref{thm:FGBsepequiv} to generalized trivial extension algebras. Let $X$ be an $A$-bimodule, and consider the $k$-algebra $A \ltimes X$. As a $k$-module, this is the direct sum $A \oplus X$, with multiplication
$$(a,x)(b,y) = (ab, ay+xb)$$
When $X = D(A) = \Hom_k(A,k)$, this is the \emph{trivial extension} of $A$. The following result shows that when the bimodule $X$ is projective as a one-sided module, then $A$ satisfies \textbf{Fgb} whenever $A \ltimes X$ does.

\begin{theorem}\label{thm:trivialextension}
Let $k$ be a commutative Noetherian ring and $A$ a Noetherian $k$-algebra which is projective as a $k$-module. Furthermore, let ${_AX_A}$ be a bimodule which is projective as a left and as a right $A$-module. Then if the algebra $A \ltimes X$ satisfies \emph{\textbf{Fgb}}, so does $A$.
\end{theorem}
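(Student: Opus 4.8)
The plan is to exhibit $A$ as an algebra that \emph{separably divides} $A \ltimes X$, and then to quote Theorem~\ref{thm:FGBsepequiv} verbatim (with the two algebras in that theorem instantiated as $A \ltimes X$ and $A$). Write $B = A \ltimes X$. First I would check that $B$ satisfies the standing hypotheses: since $B = A \oplus X$ as a $k$-module, and $X$ is finitely generated and projective as a $k$-module (being projective as a left $A$-module, with $A$ projective over $k$), the algebra $B$ is again a Noetherian $k$-algebra which is projective as a $k$-module, so Theorem~\ref{thm:FGBsepequiv} applies to it. I would also record the $k$-algebra inclusion $\iota \colon A \to B$, $a \mapsto (a,0)$, through which $B$ acquires an $A$-bimodule structure, namely the direct sum $A \oplus X$ of the regular bimodule and the given bimodule, because $X^2 = 0$ in $B$.

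Next I would set ${_BU_A} = B$, where $U$ carries the regular left $B$-module structure and the right $A$-structure coming from $\iota$, and ${_AV_B} = B$, where $V$ carries the left $A$-structure coming from $\iota$ and the regular right $B$-module structure. These bimodules are projective on both sides: $U$ is free of rank one as a left $B$-module, and as a right $A$-module (via $\iota$) it is $A \oplus X$, a sum of a free and a projective module; symmetrically for $V$, using that $X$ is also projective as a left $A$-module. The key computation is then that the multiplication map induces an isomorphism $V \ot_B U = B \ot_B B \cong B$ which, upon tracing through the structures, is an isomorphism of $A$-bimodules onto $B$ equipped with the $\iota$-induced structure, i.e.\ onto $A \oplus X$. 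Hence $A$ is a direct summand of $V \ot_B U$ as an $A$-bimodule, so $A$ separably divides $B = A \ltimes X$, and Theorem~\ref{thm:FGBsepequiv} gives that $A$ satisfies \textbf{Fgb} whenever $A \ltimes X$ does.

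I do not expect a genuine obstacle here: the argument is a direct application of Theorem~\ref{thm:FGBsepequiv}. The only point demanding a little care is the bookkeeping that keeps the "outer" regular $B$-structures separate from the $A$-structures pulled back along $\iota$ on the two copies of $B$ used for $U$ and $V$ — in particular, making sure that the one-sided projectivity statements are the ones actually needed, and that the middle tensor factor contracts so as to leave precisely the $A$-bimodule $A \oplus X$.
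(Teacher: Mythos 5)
Your argument is correct and is essentially the paper's own proof: both take $U$ and $V$ to be $A \ltimes X$ itself, with one-sided $A$-structures pulled back along the inclusion $A \to A \ltimes X$, observe that these are projective on both sides and that the middle tensor factor contracts to give $A \ltimes X \cong A \oplus X$ as an $A$-bimodule with $A$ a direct summand, so that $A$ separably divides $A \ltimes X$, and then invoke Theorem~\ref{thm:FGBsepequiv}. Your additional check that $A \ltimes X$ is again a Noetherian $k$-algebra, projective over $k$, is a harmless (and correct) verification that the paper leaves implicit.
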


\begin{proof}
The natural $k$-algebra inclusion $A \to A \ltimes X$ turns $A \ltimes X$ into an $A$-bimodule, and as such it is equal to $A \oplus X$. Now let $U$ and $V$ be $A \ltimes X$, considered as an $A$-$B$-bimodule and a $B$-$A$-bimodule, respectively. These are both projective as one-sided modules, and $U \otimes_B V$ is isomorphic to $A \ltimes X$ as an $A$-bimodule. Since $A$ is a direct summand of $A \ltimes X$ as an $A$-bimodule, we see that $A$ separably divides $A \ltimes X$. The result now follows from Theorem \ref{thm:FGBsepequiv}.
\end{proof}

We can apply this result to certain path algebras. Suppose that $k$ is a field, $Q$ a finite quiver, and $A$ an algebra of the form $kQ/ \az$ for some admissible ideal $\az \subseteq kQ$. Given an arrow $\alpha$ in $Q$, we may form the \emph{arrow removal algebra} $A / ( \alpha )$.

\begin{corollary}\label{cor:arrowremoval}
Let $k$ be a field and $A$ a $k$-algebra of the form $kQ / \az$, where $Q$ is a finite quiver and $\az$ an admissible ideal in $kQ$. Furthermore, let $\alpha$ be an arrow in $Q$ not belonging to any minimal generating set of $\az$. Then if the algebra $A$ satisfies \emph{\textbf{Fgb}}, so does the arrow removal algebra $A / ( \alpha )$.
\end{corollary}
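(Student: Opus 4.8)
The plan is to realise $A$ as a generalized trivial extension of the arrow removal algebra $B := A/(\alpha)$ by a bimodule that is projective on both sides, and then to invoke Theorem~\ref{thm:trivialextension}. Here $(\alpha)$ denotes the two-sided ideal of $A$ generated by the residue class of $\alpha$. First I would record that, since $\alpha$ occurs in no relation belonging to a minimal generating set of $\az$, the ideal $\az$ is induced from an admissible ideal $\az'$ of the path algebra $k(Q \setminus \{\alpha\})$, so that $B \cong k(Q \setminus \{\alpha\})/\az'$; in particular $B$ is again a finite dimensional $k$-algebra, hence Noetherian and free (so projective) as a $k$-module, as required by Theorem~\ref{thm:trivialextension}.

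The key structural input is the arrow removal isomorphism $A \cong B \ltimes (\alpha)$, together with the fact that $(\alpha)$, viewed as a $B$-bimodule, is projective as a left and as a right $B$-module. To see the first point, write $\alpha \colon i \to j$ and observe that the hypothesis forces $\alpha$ to lie on no oriented cycle of $Q$: otherwise a suitable power of a cycle through $\alpha$ would, by admissibility of $\az$, be forced into $\az$, which is impossible since $\az$ is generated by relations not involving $\alpha$. Hence no path of $Q$ factors through $\alpha$ twice, so $(\alpha)^2 = 0$, and the span of the residues of the paths avoiding $\alpha$ is a subalgebra of $A$ isomorphic to $B$; this yields $A \cong B \ltimes (\alpha)$ as $k$-algebras, with $(\alpha)$ a genuine $B$-bimodule. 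For the second point, the map $p \otimes q \mapsto p\bar\alpha q$ induces an isomorphism of $B$-bimodules $B e_j \otimes_k e_i B \xrightarrow{\ \sim\ } (\alpha)$, and the left-hand side is a finite direct sum of copies of the indecomposable projective left module $Be_j$ and, simultaneously, of copies of the indecomposable projective right module $e_iB$; thus $(\alpha)$ is projective on both sides. (This is precisely the content of the arrow removal construction in the literature, which I would cite rather than reprove in full.)

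Finally I would apply Theorem~\ref{thm:trivialextension} with the base algebra taken to be $B$ and the bimodule taken to be $X = (\alpha)$: since $A \cong B \ltimes (\alpha)$ satisfies \textbf{Fgb} by hypothesis, it follows that $B = A/(\alpha)$ satisfies \textbf{Fgb} as well. The only step that requires genuine care is the structural claim of the second paragraph — that $A$ decomposes as $B \ltimes (\alpha)$ with $(\alpha)$ one-sided projective over $B$; once this is in hand, the conclusion is an immediate appeal to the already established Theorem~\ref{thm:trivialextension}.
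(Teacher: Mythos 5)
Your route is the same as the paper's: realize $A$ as a generalized trivial extension $B \ltimes X$ of the arrow removal algebra $B = A/(\alpha)$ by a $B$-bimodule $X$ which is projective on both sides, and then apply Theorem~\ref{thm:trivialextension}. The paper obtains the decomposition by quoting \cite[Theorem A]{GreenPsaroudakisSolberg}, with $X = Be_s \otimes_k e_tB$; your $Be_j \otimes_k e_iB$ differs from this only by the path-composition convention. Since you explicitly defer to the literature for this structural input, your proof coincides in substance with the paper's.

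One caveat, and it concerns exactly the step you single out as ``the only step that requires genuine care'': the auxiliary claim that the hypothesis forces $\alpha$ to lie on no oriented cycle of $Q$ is false, and the argument offered for it is a non sequitur. That $\az$ is generated by relations not involving $\alpha$ does not prevent $\az$ from containing paths through $\alpha$: a general element of $\az$ is a sum of terms $p\,r\,q$ with $r$ an $\alpha$-free relation, and the paths $p,q$ may very well pass through $\alpha$. Concretely, let $Q$ be the oriented three-cycle $1 \xrightarrow{\alpha} 2 \xrightarrow{\beta} 3 \xrightarrow{\gamma} 1$ and $\az = (\beta\gamma)$, composing paths left to right. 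Then $\az$ is admissible (every path of length four contains $\beta\gamma$), and $\alpha$ does not occur in the minimal generating set $\{\beta\gamma\}$, which is the hypothesis in the sense of \cite{GreenPsaroudakisSolberg}; yet $\alpha$ lies on an oriented cycle, and indeed $(\alpha\beta\gamma)^2 \in \az$, so no contradiction arises as you claim. Likewise your assertion that no path of $Q$ factors through $\alpha$ twice fails here ($\alpha\beta\gamma\alpha$ does); what is true is that every such path lies in $\az$, so that $(\alpha)^2=0$ in $A$, $A \cong B \ltimes (\alpha)$ and $(\alpha) \cong Be_1 \otimes_k e_2B$ (of dimension $4$), exactly as \cite[Theorem A]{GreenPsaroudakisSolberg} predicts. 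In other words, the structural isomorphism you need is correct but cannot be derived from cycle-freeness; it should be cited, as you propose and as the paper does.
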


\begin{proof}
Denote the arrow removal algebra $A / ( \alpha )$ by $B$, and the starting and ending vertices of $\alpha$ by $v_s$ and $v_t$, respectively. By \cite[Theorem A]{GreenPsaroudakisSolberg}, the algebra $A$ is isomorphic to $B \ltimes X$, where $X$ is the $B$-bimodule $Be_s \otimes_k e_tB$, and $e_s$ and $e_t$ are the trivial paths corresponding to $v_s$ and $v_t$, respectively. Since $X$ is projective as a left and as a right $B$-module, the result follows from Theorem \ref{thm:trivialextension}.
\end{proof}

\begin{remark}\label{rem:arrowremoval}
(1) Note that in the corollary, both $A$ and the arrow removal algebra $A / ( \alpha )$ are quotients of path algebras by certain admissible ideals. Therefore, from Lemma \ref{lem:finitenessassumptions} and Remark \ref{rem:conditionssatisfied}, we see that the result can be stated in terms of the finiteness condition \textbf{Fg}.

(2) As shown in \cite[Main Theorem]{ErdmannPsaroudakisSolberg}, the converse of the corollary actually holds as well. In other words, $A$ satisfies \textbf{Fg} if and only if the arrow removal algebra $A / ( \alpha )$ does. This is proved by using the fact that the category of $A$-modules is a cleft extension of the category of modules over $A / ( \alpha )$.
\end{remark}

%%%%%%%%%%%%%%%%%%%%%%%%%%%%%%%%%%%%%%%%%%%%%%%%

\end{document}